\documentclass[11pt]{article}
\usepackage{amsmath, latexsym, amsfonts, amssymb, amsthm, amscd}
\usepackage{color}
\usepackage{amsmath,amssymb,amsthm, color}
\usepackage{version,tabularx,multicol}
\usepackage{graphicx,float, url, dsfont}
\usepackage{pdfsync}
\usepackage{color}
\usepackage{graphicx}
\usepackage{epsfig}
\usepackage{subfigure}
\usepackage{color}
\usepackage{float}
\usepackage{color}
\usepackage{flafter}

\usepackage[utf8]{inputenc}

\textheight 230mm \topmargin 0cm \textwidth 165mm \headheight 0pt
\oddsidemargin -0cm\headsep 0in


\newtheorem{theorem}{Theorem}
\newtheorem{corollary}{Corollary}
\newtheorem{proposition}{Proposition}
\newtheorem{lemma}{Lemma}

\newtheorem{remark}{Remark}
\theoremstyle{definition}

\newtheorem{definition}{Definition}
\newtheorem{example}{Example}
\newcommand{\p}{\Bbb{P}}

\newcommand{\e}{\mathrm e}

\newcommand{\N}{\mathbb N}
\newcommand{\R}{\mathbb{R}}

\newcommand{\dd}{\mathrm{d}}

\newcommand{\E}{\ensuremath{\mathbb{E}}}

\renewcommand{\theequation}{\thesection.\arabic{equation}}

\newcommand\numberthis{\addtocounter{equation}{1}\tag{\theequation}}
\renewcommand{\P}{\mathbb P}

\usepackage[T1]{fontenc}
\usepackage[utf8]{inputenc}
\usepackage{authblk}

\title{Particle systems with coordination}
\author[1]{Adri\'an Gonz\'alez Casanova\footnote{Universidad Nacional Aut\'onoma de M\'exico, IMATE campus CU, 04510, M\'exico, adriangcs@matem.unam.mx}, Noemi Kurt\footnote{TU Berlin, Sekr. MA 7-5, Stra\ss e des 17. Juni 136, D-10623 Berlin, kurt@math.tu-berlin.de}, Andr\'as T\'obi\'as\footnote{TU Berlin, Sekr. MA 7-5, Stra\ss e des 17. Juni 136, D-10623 Berlin, tobias@math.tu-berlin.de}}

\begin{document}
\maketitle 
\begin{abstract}
We consider a generalization of spatial branching coalescing processes in which the behaviour of individuals is not (necessarily) independent, on the contrary, individuals tend to take simultaneous actions. We show that these processes have moment duals, which happen to be multidimensional diffusions with jumps. Moment duality provides a general framework to study structural properties of the processes in this class. We present some conditions under which the expectation of the process is not affected by coordination and comment on the effect of coordination on the variance. We analyse several examples in more detail, including the nested coalescent, the peripatric coalescent with selection and coordinated migration, and the Parabolic Anderson Model.
\end{abstract}

\medskip\noindent
{\it Keywords and phrases.} Interacting particle system, branching coalescing process, coordination, moment duality, nested coalescent, coming down from infinity, peripatric coalescent, Parabolic Anderson Model, Feynman-Kac formula, branching random walk.

\medskip\noindent
{\it MSC 2010.} 60J27, 60J80, 92D15, 92D25.

\section{Introduction}

Spatial branching coalescing processes and their duals have received considerable attention in the literature. For example, in \cite{AS} particle systems on a lattice are considered, where particles undergo migration, death, branching and (pair) coalescence, independently of one another. These processes are dual to certain interacting diffusions used in the modelling of spatially interacting populations with mutation, selection and resampling. One of the many questions of investigation is the long time behaviour of such processes.

On the other hand, \emph{coordinated} transitions of several particles have lead to interesting processes in a number of models which are already well-studied in the literature, such as multiple merger coalescents \cite{pitman, MS}. More recently, the seed-bank coalescent with simultaneous switching \cite{BGKW18} has shown qualitatively different features compared to its non-coordinated version. In both these cases, the effects of coordinated vs.~independent actions of particles may lead to drastically different long term behaviour, reflected for example in the question of \lq coming down from infinity\rq.

In this paper, we present a unified framework of spatial branching coalescing interacting particle systems, where all types of occurring transitions may occur in a coordinated manner. For simplicity, we restrict our presentation to the case of finitely many spatial locations, except for a few remarks in the last section. In our construction, the size of a coordinated transition is determined according to a measure on $[0,1]$. The individuals then \lq decide\rq\ independently according to the size of the transition whether or not to participate. This construction is reminiscent of Lambda-coalescents or of the seed-bank coalescent with simultaneous switching, and leads (under some suitable conditions on the measures involved) to a continuous time Markov chain with finite jump rates. 

As a first result in Section \ref{sec1}, we prove moment duality for this class of coordinated processes. The
SDEs arings as dual processes will then be interpreted in terms of population genetics. We discuss a number of examples of processes in the (recent) mathematical literature, and construct their duals, some of which seem to be new. Via duality, we also provide some results on the long time behaviour of these models, such as a criterion for coming down from infinity for the so-called nested coalescent \cite{BDLS} and for models exhibiting death but no coalescence, and almost sure fixation in a variant of the peripatric coalescent \cite{LM} with non-coordinated selection and coordinated migration. Examples extend to situations not generally looked at from the point of view of population models, such as the famous Parabolic Anderson Model (PAM).

In Section~\ref{sec-E} we show that in absence of coalescence, the expectation of the coordinated branching coalescing process is the unique solution of a system of ODEs depending only on the total mass of the defining measures. As an example, we consider the PAM branching process and provide a straightforward new proof of the well-known Feynman-Kac formula based on our observation. In Section~\ref{sec-Var}, also in the coalescence-free case, we identify the choice of reproduction, death and migration measures that maximize or minimize the variance of the process, given the total masses of these measures. We use this to provide an upper bound on the variance of the PAM branching process. In Section~\ref{sec-infinity}, we extend some of our results, in particular our proof for the Feynman-Kac formula, to a class of infinite graphs. We point out that a prominent example of a process of our class on infinite graphs is the binary contact path process \cite{Griffeath}, a simple function of which is the contact process. Finally, as another application of the invariance of expectation, we provide a probabilistic interpretation of the expectation process of a branching random walk on an infinite uniform rooted tree.

\section{Coordinated branching coalescing processes}\label{sec-modeldef}
In this section, we present the general framework of this paper. Without coordination,
spatial branching coalescing particle systems in the setup of \cite{AS} are continuous time Markov chains with transitions according to the following definition:

\begin{definition}\label{def:fixed_rate}
Consider a finite set $V$. We write $e_v$ for the unit vector with 1 at the $v$-th coordinate, $v\in V.$ For each $v\in V$ fix the following parameters for
\begin{itemize} 
\item pair-coalescence: $c_v\geq 0$,
\item death: $d_v\geq 0$,
\end{itemize}
and for each pair $(v,u)\in V\times V$ fix parameters for
\begin{itemize}
\item branching: $r_{vu}\geq 0$, and
\item migration: $m_{vu}\geq 0$.
\end{itemize}
A \emph{structured branching coalescing process} on $V$ with these parameters is the continuous time Markov chain $(Z_t)_{t\geq 0}$ taking values in $\N_0^V$ with the following transitions:
\begin{equation}\label{eq:jumps_fixed}
z \mapsto 
\begin{cases}
z- e_v+ e_u, & \textrm{ at rate }z_v m_{vu},  v,u\in V \\
z+ e_u, & \textrm{ at rate }z_v r_{vu},  v,u\in V \\
z- e_v, & \textrm{ at rate } c_v\binom{z_v}{2}+d_vz_v.
\end{cases}
\end{equation}
\end{definition}
The set $V$ may be chosen countably infinite, see \cite{AS}, but for this paper we will restrict ourselves to finite sets. We assume $m_{vv}=0.$ Binary branching may be extended to more general reproduction mechanisms as in \cite{GCSW}, and more general pairwise interactions, see \cite{GCPP}. 

In order to include coordination into such models, we replace the positive real-valued parameters of Definition \ref{def:fixed_rate} with measures on $[0,1]$. Denote by $ \mathcal{M}[0,1]$ the space of finite measures on $[0,1]$. 

\begin{definition}\label{def:random_rate}
Fix a finite set $V$. For each $v\in V$ fix measures
\begin{itemize}
\item coalescence: $\Lambda_v\in \mathcal{M}[0,1]$,
\item death: $D_v\in \mathcal{M}[0,1]$,
\end{itemize}
and for each pair $(v,u)\in V\times V$ fix measures
\begin{itemize}
\item reproduction: $R_{vu}\in \mathcal{M}[0,1]$,
\item migration: $M_{vu}\in \mathcal{M}[0,1]$.
\end{itemize}
A \emph{structured branching coalescing process with coordination} with these parameters is the continuous time Markov chain $(Z_t)_{t\geq 0}=(Z_t^{(v)})_{t \geq 0, v \in V}$ with values in the set $\N_0^V$ of functions having domain $V$ and values in $\N_0$ such that
\begin{equation}\label{eq:jumps_random}
z \mapsto 
\begin{cases}
z- ie_v+ ie_u, & \textrm{ at rate }\int_0^1\binom{z_v}{i}y^i(1-y)^{z_v-i}\frac{1}{y}M_{vu}(\dd y), \, u,v\in V, 1\leq i\leq z_v\\
z- ie_v, & \textrm{ at rate }  \int_0^1\binom{z_v}{i}y^i(1-y)^{z_v-i}\frac{1}{y}D_{v}(\dd y),\, v\in V, 1\leq i\leq z_v\\
z+ ie_u, & \textrm{ at rate } \int_0^1\binom{z_v}{i}y^i(1-y)^{z_v-i}\frac{1}{y}R_{vu}(\dd y),\, u,v\in V, 1\leq i\leq z_v\\
z- (i-1)e_v, & \textrm{ at rate } \int_0^1\binom{z_v}{i}y^i(1-y)^{z_v-i}\frac{1}{y^2}\Lambda_{v}(\dd y),\, v\in V, 2\leq i\leq z_v.
\end{cases}
\end{equation}
\end{definition}
The rates of this process may be interpreted as individuals deciding independently to participate in an event, leading to a binomial number of affected individuals. The probability to participate in, say, a migration event from $v$ to $u$ is determined by the measure $y^{-1}M_{vu}(\dd y)$. This measure has a singularity at $y=0$ and is not necessarily finite, but (with only slight abuse of notation) \[ \int_{\{0\}}\binom{z_v}{i}y^i(1-y)^{z_v-i}\frac{1}{y}M_{vu}(\dd y)=z_v\mathds 1_{\{i=1\}}M_{vu}(\{0\}) \numberthis\label{delta0} \] 
is finite, analogously for the death and reproduction. For the coalescence, similarly,
\[ \int_{\{0\}}\binom{z_v}{i}y^i(1-y)^{z_v-i}\frac{1}{y^2}\Lambda_{v}(\dd y)=\binom{z_v}{2}\mathds 1_{\{i=2\}}\Lambda_{v}(\{0\}). \numberthis\label{delta0Lambda} \]
We will further discuss the role of these singularities below.

We abbreviate the total masses of $\Lambda_v,D_v,R_{vu}$ and $M_{vu}$ by $c_v$, $d_v$, $r_{vu}$ and $m_{vu}$ respectively, where we again assume that $m_{vv}=0$. Throughout the paper, $\delta_a$ denotes the Dirac measure in $a\in[0,1]$. Thus, in terms of Definition~\ref{def:random_rate}, the process defined in Definition~\ref{def:fixed_rate} corresponds to each of these measures being equal to the corresponding total mass times $\delta_0$. We refer to the points $v \in V$ as \emph{vertices}, which we often interpret as \emph{islands} in a spatial population model with multiple islands. Another possible interpretation of $V$ may be that of a type space, leading to multitype branching coalescing processes with coordination. The (undirected) \emph{interaction graph} associated to the measures defined in Definition~\ref{def:random_rate} is given as $G=(V,E)$, where
\[ E=\{ (u,v) \in V \times V \colon u \neq v \text{ and } \max \{ r_{uv}, r_{vu}, m_{uv}, m_{vu} \}>0 \}, \]
i.e., the vertex set of $G$ equals $V$ and we connect two different vertices $u,v \in V$ by an edge whenever there is interaction by migration or reproduction between $u$ and $v$.

We prove in Lemma \ref{lem:markov} below that \eqref{eq:jumps_random} indeed yields a Markov chain. Its infinitesimal generator is expressed below in \eqref{eq:generator}. 

Coordination of interactions in the above sense may be interpreted by means of suitable Poisson processes. We illustrate this by a first example. 

\begin{example}\label{ex:bpre}
Consider the non-spatial case $V=\{ 1 \}$ without migration, and with $\Lambda_1=D_1=0$ fixed. We first let $R_1=r_1\delta_0$ for some $r>0$. Then the rate for a branching event producing $i$ offspring if there are presently $z$ particles is given by
\[r_1\int_0^1 \binom{z}{i}y^{i-1}(1-y)^{z-i}\delta_0(\dd y)=r_1z\mathds 1_{\{i=1\}},\]
thus $(Z_t)_{t\geq 0}$ is a binary branching process where particles reproduce independently at rate $r_1,$ i.e.\ a Yule process. If on the other hand $R_1=r_1\delta_1$, then the reproduction rate is given by
\[r_1\int_0^1 \binom{z}{i}y^{i-1}(1-y)^{z-i}\delta_1(\dd y)=r_1\mathds 1_{\{i=z\}}.\]
In this case we may look at the process from the following viewpoint: Reproduction events happen according to a Poisson process with intensity $r_1$, and at each arrival time of the Poisson process, every particle produces exactly one new particle. That is, the resulting process is given by $(2^{N_t^{(r_1)}})_{t\geq 0}$ where for $\lambda>0$, $(N^{(\lambda)}_t)_{t\geq 0}$ denotes a Poisson process with intensity $\lambda$. The main difference between the two cases considered here is independence vs.~coordination. The Dirac measure $R_1=\delta_0$ gives full independence, while for $R_1=\delta_1$ the reproduction events are fully coordinated. The choice $R_1=r_1\delta_w$ for some $w\in (0,1)$ leads to a model in which reproduction events arrive according to a Poisson process with intensity $r_1/w$ and at each reproduction event each individual reproduces with probability $w$. It is interesting to observe that in all these cases $$\E_n[Z_t]=n\E\Big[(1+w)^{N^{(r_1/w)}_t}\Big]=\mathrm e^{r_1t}. $$
As we will see in Lemma \ref{lem:expectationequality}, the invariance of the expectation is not a coincidence. In the general case $R_{1}\in \mathcal{M}[0,1]$, the process will also have expectation $\mathrm e^{r_1 t}$. 
When we take the reproduction measure such that $R_{1}(dy)/y\in \mathcal{M}[0,1]$ we observe a relation to  branching processes in a random environment in the sense of \cite{AK}. To make this connection precise, let $\{(t_n,y_n)\}_{n\in\N}$ be the times at which an event occurs and their impacts. Then  $Y_n=\sum_{i=1}^{Y_{n-1}}X_i^{(n)}=Z_{t_n}$ is a branching process in random environment where for every $i\in\N$, $\P(X_i^{(n)}=2)=1-\P(X_i^{(n)}=1)=y_n$, and given $\{(t_n,y_n)\}_{n\in\N}$, the random variables $(X_i^{(n)})_{n\in\N}$ are independent.
 \end{example}

In order to study the processes rigorously, the following definitions are useful. Let 
\[ C_0=\{f \colon \N_0^V\to \mathbb{R} \colon \lim_{i\rightarrow \infty}f(z_i)=0, \forall (z_i)_{i\in \N}\text{ such that} \lim_{i\rightarrow \infty}|z_i|=\infty\}. \] 
We say that $g \in \widetilde C_0$ if $g=c+f$ for some $c\in \R$ and $f \in C_0$. We consider the process $(Z_t)_{t\geq 0}$ as a process in the one-point compactification $\bar\N_0^V$ of $\mathbb{N}_0^V$ by taking the minimal extension, that is to say that the generator at any function evaluated at the point at infinity is zero.

\begin{lemma}\label{lem:markov}
$(Z_t)_{t\geq 0}$ is a well-defined continuous time Markov chain with state space $\bar\N_0^V.$ Further, it is a conservative process, in the sense that for all $T>0$ 
\[ \P(Z_t\in \mathbb{N}_0^V, \forall t\in[0,T]\big|Z_0 \in \N_0^V)=1, \numberthis\label{conservativity} \] and the domain of its extended generator includes $\widetilde C_0$.
\end{lemma}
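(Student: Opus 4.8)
The plan is to verify three things in sequence: that the transition rates define a genuine continuous-time Markov chain with finite total jump rate out of every state in $\N_0^V$, that the chain does not explode (conservativity), and that $\widetilde C_0$ lies in the domain of the extended generator.

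\medskip\noindent
\textbf{Finiteness of jump rates.}
First I would fix a state $z \in \N_0^V$ and check that the total rate of leaving $z$ is finite. For each vertex $v$ with $z_v \geq 1$ and each type of event, I would sum the rate over the admissible values of $i$. The key computation is that for a finite measure $N$ on $[0,1]$,
\begin{equation*}
\sum_{i=1}^{z_v} \int_0^1 \binom{z_v}{i} y^{i}(1-y)^{z_v-i}\frac{1}{y}\, N(\dd y) = \int_0^1 \frac{1-(1-y)^{z_v}}{y}\, N(\dd y),
\end{equation*}
and the integrand $\frac{1-(1-y)^{z_v}}{y}$ extends continuously to $z_v$ at $y=0$ (consistent with \eqref{delta0}) and is bounded by $z_v$ on $[0,1]$; hence the total migration, death and reproduction rate out of $z$ is bounded by $z_v\, N([0,1]) < \infty$. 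For coalescence I would use the analogous identity with the extra factor $1/y$, namely $\sum_{i=2}^{z_v}\binom{z_v}{i}y^{i}(1-y)^{z_v-i}y^{-2}\leq \binom{z_v}{2}$, whose uniform bound on $[0,1]$ follows from a Taylor expansion of $(1-y)^{z_v}$ and is consistent with \eqref{delta0Lambda}. Summing over the finitely many $v \in V$ and the finitely many event types gives a finite total exit rate, so the chain is well-defined by the standard construction of a pure-jump Markov process from its $Q$-matrix.

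\medskip\noindent
\textbf{Conservativity.}
For \eqref{conservativity}, I would show the chain cannot reach $\infty$ in finite time. Since $V$ is finite, migration and coalescence cannot increase the total mass $|z| = \sum_v z_v$, and death strictly decreases it; the only mechanism for growth is reproduction, where a single event from vertex $v$ adds at most $z_v \leq |z|$ particles. Thus from state $z$ the total mass can at most double per reproduction event, and the total reproduction rate out of $z$ is bounded by $|z|\sum_{u,v} r_{vu}$, which is linear in $|z|$. A standard non-explosion criterion for pure-jump chains (comparison with a pure-birth process whose birth rate grows at most linearly, e.g.\ a Yule-type process, whose explosion time is a.s.\ infinite) then yields that the minimal process is conservative and \eqref{conservativity} holds. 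I expect this to be the main obstacle, since one must control the compounding effect of coordinated reproduction carefully; the cleanest route is likely a Lyapunov/Foster argument exhibiting a function like $\log(1+|z|)$ or $|z|$ on which the generator grows sublinearly, thereby ruling out explosion.

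\medskip\noindent
\textbf{Domain of the extended generator.}
Finally, to show $\widetilde C_0$ is contained in the domain of the extended generator, I would write the generator $\mathcal L$ acting on $f \in C_0$ as the sum over all admissible transitions of (rate) $\times (f(z') - f(z))$, and verify that $\mathcal L f$ is well-defined and that the associated Dynkin martingale
\begin{equation*}
f(Z_t) - f(Z_0) - \int_0^t \mathcal L f(Z_s)\, \dd s
\end{equation*}
is a genuine martingale. Since constants are annihilated by $\mathcal L$ (every transition preserves constant functions) and $\mathcal L$ is linear, it suffices to treat $f \in C_0$ and then extend to $g = c + f \in \widetilde C_0$. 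Boundedness of $f \in C_0$ together with the finite exit rates established above guarantees $\mathcal L f$ is bounded on each level set $\{|z| \leq K\}$; combined with conservativity, the integrability needed for the Dynkin formula follows, and the minimal-extension convention (generator zero at $\infty$) makes the statement consistent on $\bar\N_0^V$.
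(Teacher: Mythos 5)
Your overall structure (finite exit rates, then non-explosion, then the extended generator) matches the paper's, and your first step is essentially identical to theirs. The genuine problem is in the conservativity step, which you yourself identify as the main obstacle. The principle you invoke there --- total event rate linear in $|z|$ plus the fact that each reproduction event at most doubles $|z|$, hence non-explosion by comparison with a Yule-type process --- is false. Consider the pure-jump chain on $\N$ that jumps $n \mapsto 2n$ at rate $n$: its event rate is linear and each event exactly doubles the state, yet after $k$ events the state is $2^k n_0$ and the holding times are independent exponentials with rates $2^k n_0$, whose sum is a.s.\ finite, so this chain explodes. A Yule process avoids explosion only because its jumps have size one; your process does not. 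What actually saves the coordinated process is that events adding a fraction $\approx y$ of the population occur only at rate $\approx R(\dd y)/y$, so the \emph{expected} increment per unit time is $\int z_v y \cdot y^{-1} R_{vu}(\dd y) = z_v r_{vu}$, i.e.\ the drift of $|z|$ is at most $r|z|$ with $r=\sum_{v,u} r_{vu}$. This is the bound you gesture at with your Lyapunov fallback, but note that what you need is $A f \le r f$ for $f(z)=1+|z|$ (linear growth of the generator in $f$, giving $\E[f(Z_t)] \le \e^{rt} f(z_0)$), not sublinear growth; and this is the step that must actually be carried out, not left as ``likely the cleanest route.'' The paper packages the same estimate probabilistically: it couples $(Z_t)_{t\ge0}$ with a one-dimensional pure-reproduction process $(Y_t)_{t\ge0}$ with measure $R=\sum_{v,u}R_{vu}$ so that $|Z_t|\le Y_t$, computes $\E[Y_t]=\e^{rt}$ as in Example 1, and concludes $Y_t<\infty$ a.s.\ from monotonicity plus finite expectation.

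The last step is acceptable as a sketch but thinner than you suggest: for $f\in C_0$ the function $Af$ need not be bounded (the coalescence rates grow quadratically in $|z|$ while $f(z')-f(z)$ need not decay fast enough), so ``boundedness of $f$ plus conservativity'' does not by itself give the integrability $\E\bigl[\int_0^t |Af(Z_s)|\,\dd s\bigr]<\infty$ needed to pass from the localized Dynkin identity to a true martingale. The paper sidesteps this by first verifying the martingale property for the family $\sum_i a^{(i)}\e^{-m^{(i)}z^{(i)}}$ via the jump-intensity condition of Ethier--Kurtz, and then extending to all of $C_0$ by Stone--Weierstrass; if you want to argue directly for general $f\in C_0$, you need to supply the moment bounds on the rates along the path (or an equivalent localization-plus-uniform-integrability argument) explicitly.
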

We recall that the domain of the extended generator is equal to the set of functions corresponding to the associated martingale problem, which will be spelt out in the proof of the lemma below. Note that if \eqref{conservativity} holds for all $T>0$, then this together with the continuity of measures implies
\[ \P\big(Z_t\in \mathbb{N}_0^V, \forall t\in[0,\infty)\big|Z_0 \in \N_0^V \big)=1, \]
i.e., almost surely, the process $(Z_t)_{t \geq 0}$ does not explode within finite time.
\begin{proof}[Proof of Lemma~\ref{lem:markov}]
The core of the proof is a domination argument and a control of the overall jump intensities (as required in \cite[Chapter 4, (11.9)]{EK}) in order to prevent explosion.

In order to verify that the process is conservative, we first show that the rate at which the process leaves any state $z\in \N_0^V$ is finite. Thus \eqref{eq:jumps_random} yields a $Q$-matrix, which generates the semigroup of a Markov chain on $\N_0^V.$ Observe that for all $u,v\in V, z_v\in \N_0$
\begin{align*}
\sum_{i=1}^{z_v}\int_0^1 \binom{z_v}{i}y^i(1-y)^{z_v-i}\frac{1}{y}M_{vu}(\dd y)=&\int_0^1 (1-(1-y)^{z_v})\frac{1}{y}M_{vu}(\dd y)\\
\leq &\int_0^1 z_v y\cdot \frac{1}{y}M_{vu}(\dd y)=z_v m_{vu}.
\end{align*}
Similar calculations hold for the reproduction and death. For the coalescence we get
\begin{align*}
\sum_{i=2}^{z_v}\int_0^1 \binom{z_v}{i}y^i(1-y)^{z_v-i}\frac{1}{y^2}\Lambda_{v}(\dd y)=&\int_0^1 (1-(1-y)^{z_v}-z_vy(1-y)^{z_v-1})\frac{1}{y^2}\Lambda_{v}(\dd y)\\
\leq &z_v(z_v-1)c_v.
\end{align*}
 Thus the total rate at which the process jumps out of state $z\in \N_0^V$ is bounded from above by
\[\sum_{v \in V} z_v \left[\sum_{u=1 }^K( m_{vu}+r_{vu}+d_{v}+(z_v-1)c_v)\right]<\infty.
\]
The fact that the process is conservative can be proved directly. However, we use a simple stochastic domination argument. Let $(Y_t)_{t \geq 0}$ be a one-dimensional process in our class with only one non-zero parameter which is $R=\sum_{v \in V}  \sum_{u \in V} R_{vu}$. Then one can couple the processes $(Y_t)_{t \geq 0}$ and $(Z_t)_{t \geq 0}$ in such a way that
\[ \P(|Z_t| \leq Y_t,~ \forall t \geq 0)=1. \]
As we saw in Example 1, $\mathbb{E}[Y_t]=\mathrm e^{rt}$ where $r=\sum_{v \in V} \sum_{u \in V} r_{vu}$.
The fact that $(Y_t)_{t \geq 0}$ is increasing together with the finiteness of its expectation imply that $\P(Y_t<\infty, \forall t\in [0,T])=1$ for all $T>0$. From the stochastic domination we conclude that $(Z_t)_{t \geq 0}$ is conservative.


Finally, let us study the extended generator of $(Z_t)_{t \geq 0}$ (and the associated martingale problem). We observe that for any  $(a^{(i)})_{i \in V}$, $(m^{(i)})_{i \in V}$ such that $m^{(i)}\geq 0$ and $
a^{(i)}\in \R$ for all $i \in V$,
$$
\Big(\sum_{i\in V} a^{(i)}\mathrm e^{-m^{(i)}Z^{(i)}_t}-\int_0^t A \big(\sum_{i\in V} a^{(i)} \mathrm e^{-m^{(i)}Z^{(i)}_s}\big)ds\Big)_{t \geq 0}
$$
is a martingale, where $A$ is the pointwise generator of $(Z_t)_{t \geq 0}$ that we describe in Equation \eqref{eq:generator}. The martingale property follows from \cite[Chapter 4, Problem 15 (page 263)]{EK}, since condition \cite[Chapter 4, (11.9)]{EK} is satisfied. It follows that all functions of the form $f(z)=\sum_{k=1}^r\sum_{i\in V}a^{(i)}_k\mathrm e^{-m^{(i,k)}Z^{(i)}_t}$, for $r\in \mathbb{N}$, $i \in V$, $m^{(i,k)}>0$ and $a^{(i)}_k\in \mathbb{R}$, are in the extended domain of $(Z_t)_{t \geq 0}$. Finally, as this family of functions is dense in $C_0$, an application of the Stone--Weierstrass theorem allows us to conclude that for every $f\in C_0$, writing
$$
M_t^f=f(Z_t)-\int_0^t A f(Z_s)ds,
$$
$(M^f_t)_{t \geq 0}$ is a martingale. Now, if $g=c+f$ for $c \in \R$, $M_t^g=g(Z_t)-\int_0^t A g(Z_s) \dd s=M_t^f+c$ and thus $(M_t^g)_{t \geq 0}$ is also a martingale. Hence, $g$ is in the domain of the extended generator of $(Z_t)_{t \geq 0}$.
\end{proof}

Besides the branching processes in random environment briefly discussed in~Example \ref{ex:bpre}, several members of this class of processes are known:

\begin{enumerate}
\item Choosing $\Lambda_v\in \mathcal{M}[0,1]$, $D_v=0$, $R_v=0$ and $M_{uv}=m_{uv}\delta_0$ leads to the block-counting process of the structured $\Lambda$-coalescent, see \cite{pitman, MS, LSt}.
\item\label{example-Felix}For $V=\{v\}$, $\Lambda_v= 0$, $D_v=\delta_p$, $R_v=r \delta_0$, $r>0$, we obtain a branching process with binomial disasters as discussed in \cite{HP}.
\item\label{example-simultaneousswitching} For $V=\{1,2\}$, $\Lambda_1=\delta_0$, $\Lambda_2=0$, $M_{12},M_{21} \in \mathcal M[0,1]$, and for all $v \in V$, $D_v=R_v=0$, we get the seed-bank coalescent with simultaneous migration \cite{BGKW18}.
\item\label{example-spatialseedbank} Fix $n \in \N$, $K>0$, $V=\{ v_i \colon i \in \{1,\ldots,n\} \} \cup \{ w_i \colon i \in \{1,\ldots,n\} \}$, $(e_i)_{i=1}^n \in \R^n$, $(d_i)_{i=1}^n \in \R^n$ and $(a(i,j))_{i,j \in \{1,\ldots,n\},i \neq j} \in \R^{n\times n}$. Then for $\Lambda_{v_i}= d_i \delta_0$, $\Lambda_{w_i}=0$, $M_{v_i v_j}= a(i,j) \delta_0$, $M_{v_i w_i}=e_i\delta_0$ and $M_{w_i v_i} = K e_i \delta_0$, $i,j=1,\ldots n$, we obtain the moment dual of a spatial seed bank model \cite[Model 1]{GdHO20}. Further, \cite[Model 2]{GdHO20}, the moment dual of the multi-layer seed-bank model also satisfies Definition~\ref{def:random_rate}, but with different migration measures. These examples are even included in Definition~\ref{def:fixed_rate}, whereas different choices of the measures $M_{v_i v_j}$ yield spatial variants of the seed-bank coalescent with simultaneous migration (fulfilling Definition~\ref{def:random_rate} only).
\item\label{example-subordinatorenv} For $V=\{v\}$, $\Lambda_v\in \mathcal{M}[0,1]$, $D_v=0$ and $R_v \in \mathcal{M}[0,1],$ we obtain coordinated branching coalescing processes \cite{GCSW} which arise as the moment dual of the Wright-Fisher model with selection in a (subordinator) random environment in the sense of \cite{BCM}. Indeed, this is the process $(Z_t)_{t \geq 0}$ in \cite[Theorem 3.2]{BCM} with the following choice of parameters: $b_1=\sigma_E=0$, $p(z,w)=z$, and the Lévy process $(Y_t)_{t \geq 0}$ being a decreasing Lévy process (with a nonpositive drift), i.e., the negative of a subordinator.
\item\label{example-hierarchical} For a general finite graph $V$, and for all $v,u \in V$ with $u \neq v$ and for some $R, D \in \mathcal M [0,1]$ and $c,c',c''>0$, the choice $R_v=R$, $D_v=D$ and $\Lambda_v=c\delta_0$, further, $M_{uv} = c'\delta_0+c''\delta_1$ yields the moment dual of the hierarchical Moran model introduced in \cite[Section 2.1]{Dawson}, in the case when there is no selection on the level of colonies. In the particular case $R=D=0$, this process is Kingman's coalescent with erosion \cite{FLS}.
\item $V=[K]^d \subset \mathbb Z^d$ where $[K]=\{ 1,\ldots,K\}$, and for all $v \in [K]^d$, $\Lambda_v=0$, $D_v=\xi_v^{-} \delta_0$, $R_v=\xi_v^+ \delta_0$ and $m_{vu}=\delta_0 \mathds 1_{ \{ |v-u| =1 \}}$, where $\{\xi_v^+\}_{v \in [K]^d}$ and $\{\xi_v^-\}_{v \in [K]^d}$ are two families of independent and identically distributed random variables in $[0,\infty)$, leads to a branching process whose expectation is a solution of the Parabolic Anderson Model (PAM), see \cite{KPAM}. In the context of the PAM, coordination and some consequences will be discussed in Sections~\ref{sec:PAM} and~\ref{sec-Var}, which will partially be extended to infinite graphs in Section~\ref{sec-infinity}. 
\end{enumerate}

The first example is classical, and so is the interpretation as a coordinated process: According to an underlying Poisson point process, coalescence events happen, and at each event, blocks decide independently according to $y\in[0,1]$ determined by $\Lambda$ whether or not to participate in the merger. 
Examples \ref{example-Felix}, \ref{example-simultaneousswitching}, \ref{example-spatialseedbank}, \ref{example-subordinatorenv} and \ref{example-hierarchical} are recent in the literature. In these cases, coordination (of migration respectively death and reproduction) was used to construct models that include interesting features. For all five models, moment duality results were proved. The PAM is a well-understood model with a large literature. Despite having a moment dual, it is not usually included in the class of models that can be studied using the techniques of population genetics.  We will introduce below \textit{the coordinated processes associated to the PAM}; all the members of this family have the same expectation, but radically different behaviour. 

Another classical example of a process of our class is the \emph{binary contact path process} \cite{Griffeath}, which is strongly related to the contact process. Since these processes are usually studied on infinite graphs, we will recall them in this context in Section~\ref{sec-infinity}.

\section{Moment Duality}\label{sec1}

Since the process $(Z_t)_{t\geq 0}$ is a pure jump Markov process with finite rates, it is straightforward to identify its generator, which we denote by $A.$ It acts on bounded measurable functions $f:\N_0^V\to\R$ by
\begin{equation}
Af( z)=\sum_{u,v\in V} A_{M_{vu}}f(z)+\sum_{u,v\in V} A_{R_{vu}}f( z)+\sum_{v\in V}A_{D_{v}}f(z)+\sum_{v\in V} A_{\Lambda_v}f(z),
 \end{equation}
where
\begin{align*}
A_{M_{vu}}f(z) & = \int_0^1\sum_{i=1}^{z_v}\binom{z_v}{i}[f(z- ie_v+ ie_u)-f( z)]y^i(1-y)^{z_v-i}\frac{1}{y}M_{vu}(\dd y), \\
A_{R_{vu}}f( z) & = \int_0^1 \sum_{i=1}^{z_v}\binom{z_v}{i}[f( z+ ie_u)-f( z)]y^i(1-y)^{z_v-i}\frac{1}{y}R_{vu}(\dd y), \\
A_{D_{v}}f(z) &= \int_0^1\sum_{i=1}^{z_v}\binom{z_v}{i}[f(z- ie_v)-f( z)]y^i(1-y)^{z_v-i}\frac{1}{y}D_v(\dd y), \\
A_{\Lambda_{v}}f(z) &= \int_0^1\sum_{j=2}^{z_v}\binom{z_v}{j}[f(z- (j-1)e_v)-f(z)](1-y)^jy^{z_v-j}\frac{1}{y^2}\Lambda_{v}(\dd y). \numberthis\label{eq:generator}
\end{align*}
Our goal is to derive a moment duality. As a first step, we derive a generator duality. Define for functions $f \in \mathcal C^2([0,1]^V,\R)$

\begin{equation}
Bf(x):=\sum_{u,v\in V} B_{M_{vu}}f(x)+\sum_{u,v\in V} B_{R_{vu}}f(x)+\sum_{v\in V}B_{D_{v}}f(x)+\sum_{v\in V} B_{\Lambda_v}f(x),
 \end{equation}
where
\begin{equation}\label{eq:dual-migration}
B_{M_{vu}}f(x):= \int_0^1[f(x+ e_vy(x_u-x_v))-f(x)]\frac{1}{y}M_{vu}(\dd y),
\end{equation}
\begin{equation}\label{eq:dual-branching}
B_{R_{vu}}f(x):= \int_0^1[f(x+e_vyx_v(x_u-1))-f(x)]\frac{1}{y}R_{vu}(\dd y),
\end{equation}
\begin{equation}\label{eq:dual-death}
B_{D_{v}}f(x):= \int_0^1[f(x+e_vy(1-x_v))-f(x)]\frac{1}{y}D_v(\dd y)
\end{equation}
and
\begin{equation}\label{eq:dual-coalescence}
B_{\Lambda_{v}}f(x):= \int_0^1[x_vf(x+e_vy(1-x_v))+(1-x_v)f(x-e_vyx_v)-f(x)]\frac{1}{y^2}\Lambda_{v}(\dd y).
\end{equation}
Formulas \eqref{eq:dual-migration}--\eqref{eq:dual-coalescence} are valid for measures that have no atoms at zero, which are extended to the case of measures having atoms at zero via a standard abuse of notation, which we will explain now. We first clarify how to understand these formulas if one of these measures equals $\delta_0$. If $M_{vu}=\delta_0$, then \eqref{eq:dual-migration} is to be understood as
\[ B_{M_{vu}} f(x)= (x_u-x_v) \frac{\partial f}{\partial x_v}(x).
\numberthis\label{eq:independent-migration} \]
If $R_{vu}=\delta_0$, then \eqref{eq:dual-branching} degenerates to 
\[ B_{R_{vu}} f(x)= x_v(x_u-1) \frac{\partial f}{\partial x_v}(x). \numberthis\label{eq:independent-branching}\]
If $D_v=\delta_0$, then \eqref{eq:dual-death} reads as
\[ B_{D_v} f(x)=(1-x_v) \frac{\partial f}{\partial x_v}(x). \numberthis\label{eq:independent-death} \]
Finally, if $\Lambda_v=\delta_0$, then we obtain the generator of the corresponding Wright--Fisher diffusion as the limit of \eqref{eq:dual-coalescence}:
\[ B_{\Lambda_v} f(x)=\frac{1}{2} x_v(1-x_v) \frac{\partial^2 f}{\partial x_v ^2 }(x). \numberthis\label{eq:WFdiffusion}\]
A general measure $\mu \in \mathcal M[0,1]$ can always be decomposed as $\mu=c\delta_0 + \mu'$ where $c\geq 0$ and $\mu'(\{0\})=0$. The general formulas for $B_{M_{vu}},B_{R_{vu}},B_{D_v}$ and $B_{\Lambda_v}$ are then obtained as a linear combination of the generators $B_{M_{vu}}$ in \eqref{eq:dual-migration} and \eqref{eq:independent-migration}, the ones $B_{R_{vu}}$ in \eqref{eq:dual-branching} and \eqref{eq:independent-branching}, the ones $B_{D_v}$ in \eqref{eq:dual-death} and \eqref{eq:independent-death}, respectively the ones $B_{\Lambda_v}$ in \eqref{eq:dual-coalescence} and \eqref{eq:WFdiffusion}. \\ \smallskip

Let $H:[0,1]^V\times \N_0^V \rightarrow [0,1]$ be such that for any $z \in  \N_0^V$ and $x \in[0,1]^V$
\[
H(x, z)=\prod_{v \in V} x_{v}^{z_v}
\]
with the convention that $0^0=1$. For $ z \in  \N_0^V$ and $ x \in[0,1]^V$ we write $H_{x}(z):=H(x, z)$ and $H_{z}(x):=H(x, z)$ in order to indicate the coordinate functions. Clearly, $H_x$ for $x\in[0,1]^V$ is in the domain of the generator $A$, and $H_z$ for $z \in \N_0^V$ is in the domain of the generator of $B$.

\begin{theorem}\label{thm:generator_duality}
For $H$ defined above, 
\[A H_x(z)=BH_z(x)\quad \forall z\in\N_0^V, x\in [0,1]^V.\]
\end{theorem}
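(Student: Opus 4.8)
The plan is to exploit the fact that both generators decompose into sums of the same four families (migration, reproduction, death, coalescence) indexed by the same pairs and vertices, and to prove the identity separately for each summand, i.e.\ to establish $A_{M_{vu}}H_x(z)=B_{M_{vu}}H_z(x)$ and likewise for $R_{vu}$, $D_v$ and $\Lambda_v$; summing over $u,v$ then yields the claim. The single observation driving every case is that translating the argument $z$ of $H$ multiplies $H$ by an explicit monomial in $x$: since $H(x,z)=\prod_{v}x_v^{z_v}$, one has $H(x,z-ie_v+ie_u)=H(x,z)(x_u/x_v)^i$, $H(x,z+ie_u)=H(x,z)x_u^i$, $H(x,z-ie_v)=H(x,z)x_v^{-i}$ and $H(x,z-(j-1)e_v)=H(x,z)x_v^{1-j}$. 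Consequently each $A$-side expression factors as $H(x,z)$ times a finite binomial sum in $i$ (or $j$), which I can collapse with the binomial theorem.

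For migration, reproduction and death I would carry out a uniform computation. After factoring out $H(x,z)$, the relevant sum has the shape $\sum_{i=1}^{z_v}\binom{z_v}{i}(\theta^i-1)y^i(1-y)^{z_v-i}$, where $\theta$ equals $x_u/x_v$, $x_u$ or $x_v^{-1}$, respectively. The $i=0$ term vanishes, so the binomial theorem gives $\sum_{i=0}^{z_v}\binom{z_v}{i}(\theta y)^i(1-y)^{z_v-i}-1=(1+y(\theta-1))^{z_v}-1$. On the dual side, the vector fields in \eqref{eq:dual-migration}, \eqref{eq:dual-branching} and \eqref{eq:dual-death} shift the $v$-th coordinate of $x$ precisely so that $H_z$ evaluated at the shifted point equals $H(x,z)(1+y(\theta-1))^{z_v}$ (e.g.\ for migration $x_v\mapsto x_v+y(x_u-x_v)$, whence the ratio to $x_v$ is $1+y(x_u/x_v-1)$). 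Thus the two integrands agree for every $y$, and integrating against $\tfrac1y$ times the corresponding measure closes these three cases.

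The coalescence term is the genuine obstacle and needs the most care. Reading the weights $\binom{z_v}{j}y^j(1-y)^{z_v-j}$ off the transition rates in \eqref{eq:jumps_random}, I would split the monomial factor as $x_v^{1-j}-1=x_v\,x_v^{-j}-1$, which produces two binomial sums over $j=2,\dots,z_v$; completing each to a full sum (subtracting the $j=0$ and $j=1$ terms) and applying the binomial theorem yields, after the $j=1$ contributions cancel between the two sums, exactly the combination $x_v\big(1+y(1-x_v)/x_v\big)^{z_v}+(1-x_v)(1-y)^{z_v}-1$. This is precisely $H(x,z)^{-1}$ times the integrand of $B_{\Lambda_v}$ in \eqref{eq:dual-coalescence}, because that generator evaluates $H_z$ at the two shifted points $x_v\mapsto x_v+y(1-x_v)$ and $x_v\mapsto x_v(1-y)$ with weights $x_v$ and $1-x_v$. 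The delicate point is the $y^{-2}$ singularity: one must check that the matched integrand vanishes to second order as $y\to0$. A short Taylor expansion shows the constant and the order-$y$ contributions cancel and the integrand behaves like $\binom{z_v}{2}(1-x_v)x_v^{-1}y^2$, so that division by $y^2$ leaves a bounded quantity and the integral against $\Lambda_v$ is well defined.

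Finally I would address the atoms at $0$ via the decomposition $\mu=c\delta_0+\mu'$ with $\mu'(\{0\})=0$. For $\mu'$ the pointwise-in-$y$ integrand identities above apply directly. For the $c\delta_0$ part the generators degenerate to the first- and second-order differential operators \eqref{eq:independent-migration}--\eqref{eq:WFdiffusion}, and since $\partial_{x_v}H_z=z_vH(x,z)/x_v$ and $\partial_{x_v}^2H_z=z_v(z_v-1)H(x,z)/x_v^2$, these match the $A$-side contributions of the atom at $0$ coming from \eqref{delta0}--\eqref{delta0Lambda}; equivalently, they are the $y\to0$ limits of the integrands already computed. Combining the four families and the two parts of each measure gives $AH_x(z)=BH_z(x)$. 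I expect the coalescence computation, specifically verifying the second-order cancellation that tames the $y^{-2}$ singularity, to be the main technical hurdle; the other three families reduce to routine binomial-theorem identities.
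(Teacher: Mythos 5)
Your proposal is correct and follows essentially the same route as the paper: split both generators into the migration, reproduction, death and coalescence parts, factor out $H(x,z)$, collapse the binomial sums via the binomial theorem, and match the resulting integrands pointwise in $y$ with those of $B$. Your additional checks (the second-order vanishing that tames the $y^{-2}$ singularity in the coalescence term, and the explicit treatment of atoms at $0$ via $\mu=c\delta_0+\mu'$) are correct and in fact slightly more careful than the paper's computation, which handles these points implicitly.
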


\begin{proof} We check this for the four parts of the generator.
Migration:
\begin{eqnarray*}
A_{M_{vu}}H_{x}( z)&=&H_{x}( z)\int_0^1\sum_{i=0}^{z_v}\binom{z_v}{i}[x_v^{-i}x_u^i-1]y^i(1-y)^{z_v-i}\frac{M_{vu}(\dd y)}{y}\\
&=&H_{x}(z)x_v^{-z_v}\int_0^1\sum_{i=0}^{z_v}\binom{z_v}{i}[x_v^{z_v-i}x_u^i-x_v^{z_v}]y^i(1-y)^{z_v-i}\frac{M_{vu}(\dd y)}{y}\\
&=&H_{ x}(z)x_v^{-z_v}\int_0^1[(x_v(1-y)+x_uy)^{z_v}-x_v^{z_v}]\frac{M_{vu}(\dd y)}{y}\\
&=&\int_0^1[H_{z}(x+e_vy(x_u-x_v))-H_{z}(x)]\frac{M_{vu}(\dd y)}{y}\\
&=& B_{M_{vu}}H_{z}(x).
 \end{eqnarray*}
Reproduction:
\begin{eqnarray*}
A_{R_{vu}}H_{x}(z)&=&H_{x}( z)\int_0^1\sum_{i=0}^{z_v}\binom{z_v}{i}[x_u^i-1]y^i(1-y)^{z_v-i}\frac{R_{vu}(\dd y)}{y}\\
&=&H_{x}( z)\int_0^1[(x_uy+(1-y))^{z_v}-1]\frac{R_{vu}(\dd y)}{y}\\
&=&\int_0^1[H_{ z}(x+e_vyx_v(x_u-1))-H_{ z}( x)]\frac{R_{vu}(\dd y)}{y}\\
&=& B_{R_{vu}}H_{z}( x)
 \end{eqnarray*}
where in the third equality we used $x_v^{z_v}(x_uy+(1-y))^{z_v}=(x_v(1-y)+x_vx_uy)^{z_v}=(x_v-yx_v(1-x_u))^{z_v}$.\\
Death:
\begin{eqnarray*}
A_{D_{v}}H_{ x}( z)&=&H_{x}( z)\int_0^1\sum_{i=0}^{z_v}\binom{z_v}{i}[x_v^{-i}-1]y^i(1-y)^{z_v-i}\frac{D_{v}(\dd y)}{y}\\
&=&H_{x}( z)x_v^{-z_v}\int_0^1\sum_{i=0}^{z_v}\binom{z_v}{i}[x_v^{z_v-i}-x_v^{z_v}]y^i(1-y)^{z_v-i}\frac{D_{v}(\dd y)}{y}\\
&=&H_{x}(z)x_v^{-z_v}\int_0^1[(x_v(1-y)+y)^{z_v}-x_v^{z_v}]\frac{D_{v}(\dd y)}{y}\\
&=&\int_0^1[H_{z}(x+e_vy(1-x_v))-H_{z}(x)]\frac{D_{v}(\dd y)}{y}\\
&=& B_{D_{v}}H_{ z}( x).
 \end{eqnarray*}
 
Coalescence (this calculation in well-known, but we include it for completeness):
 
 \begin{eqnarray*}
A_{\Lambda_{v}}H_{ x}(z)&=&H_{ x}( z)x_v^{-z_v}\int_0^1\sum_{i=0}^{z_v-1}\binom{z_v}{i}[x_v^{i+1}-x_v^{z_v}](1-y)^iy^{z_v-i}\frac{\Lambda_{v}(\dd y)}{y^2}\\
&=&H_{ x}(z)x_v^{-z_v}\int_0^1[x_v(x_v(1-y)+y)^{z_v}+(1-x_v)x_v^{z_v}(1-y)^{z_v}-x_v^{z_v}]\frac{\Lambda_{v}(\dd y)}{y^2}\\\nonumber
&=&H_{ x}( z)x_v^{-z_v}\int_0^1[x_v(x_v+y(1-x_v))^{z_v}+(1-x_v)(x_v-yx_v)^{z_v}-x_v^{z_v}]\frac{\Lambda_{v}(\dd y)}{y^2}\\\nonumber
&=&\int_0^1[x_vH_{z}(x+e_vy(1-x_v))+(1-x_v)H_{ z}(x -e_vyx_v)-H_{ z}( x)]\frac{\Lambda_{v}(\dd y)}{y^2}\\\nonumber
&=& B_{\Lambda_{v}}H_{ z}( x).
 \end{eqnarray*}
 \end{proof}
We denote by $(X_t)_{t \geq 0}$ the Markov process on $[0,1]^V$ with infinitesimal generator $B$.
 
 \begin{corollary}\label{duality}
  $(X_t)_{t\geq 0}$ and the process $(Z_t)_{t\geq 0}$ of Definition \ref{def:random_rate} are moment duals, that is, for all $z\in\N_0^V, x\in[0,1]^V, t\geq 0$ we have
 \begin{equation}\label{eq:duality}
 \E_{x}[H(X_t,z)]= \E_{z}[H(x, Z_t)].
 \end{equation}
 \end{corollary}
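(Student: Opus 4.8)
The plan is to derive the moment duality (Corollary~\ref{duality}) from the generator duality (Theorem~\ref{thm:generator_duality}) via the standard duality machinery, which turns an identity of the form $AH_x(z) = BH_z(x)$ into the integrated relation \eqref{eq:duality}. First I would recall the classical lemma (see, e.g., Ethier--Kurtz or Jansen--Kurt): if $(Z_t)$ and $(X_t)$ are Markov processes with generators $A$ and $B$, and $H$ is a bounded function that lies in the domain of both generators in the appropriate sense with $AH_x(z)=BH_z(x)$ for all $x,z$, then the two processes are dual with respect to $H$, provided one can justify differentiating $\E_x[H(X_t,z)]$ and $\E_z[H(x,Z_t)]$ under the expectation and applying the generators. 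The skeleton of the argument is to fix $t>0$, $x$, $z$, and consider the function $s \mapsto \E_x\big[\E_z[H(X_s, Z_{t-s})]\big]$ (a mixed expectation over the two independent processes); show it is constant in $s$ on $[0,t]$ by checking its derivative vanishes, and then equate the endpoints $s=0$ and $s=t$ to obtain \eqref{eq:duality}.

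The key steps, in order, are as follows. First, I would observe that $H$ is bounded (it takes values in $[0,1]$ since $x\in[0,1]^V$ and $z\in\N_0^V$), that $H_x\in\mathcal C^2([0,1]^V)$ so it lies in the domain of $B$, and that $H_z$ lies in the extended domain of $A$ — here I would invoke Lemma~\ref{lem:markov}, which guarantees $(Z_t)$ is conservative and that the relevant functions are in the domain of its extended generator, together with the martingale problem formulation established there. Second, I would write the two Dynkin/martingale identities: for $(Z_t)$, the process $s\mapsto H_x(Z_s)-\int_0^s AH_x(Z_r)\,\dd r$ is a martingale, and analogously for $(X_t)$ with $B$. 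Third, I would differentiate $\psi(s):=\E_x\big[\E_z[H(X_s,Z_{t-s})]\big]$ and use the two identities to show $\tfrac{\dd}{\dd s}\psi(s)=\E[\,BH_{Z_{t-s}}(X_s)-AH_{X_s}(Z_{t-s})\,]=0$ by Theorem~\ref{thm:generator_duality}. Evaluating $\psi(0)=\E_z[H(x,Z_t)]$ and $\psi(t)=\E_x[H(X_t,z)]$ then yields the claim.

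The main obstacle is the justification of the interchange of differentiation and expectation and, relatedly, confirming that the duality function $H_z$ genuinely sits in the domain of the extended generator of $(Z_t)$ with integrable martingale increments — the process $(Z_t)$ takes values in an unbounded state space $\N_0^V$, so a priori $AH_x(z)$ could grow in $z$. The care needed is to control the boundedness and integrability of the generator applied along the paths. Here the work done in Lemma~\ref{lem:markov} is exactly what makes this go through: the conservativity (non-explosion) estimate and the stochastic domination by the process $(Y_t)$ with $\E[Y_t]=\mathrm e^{rt}$ give uniform-in-time integrable control on $|Z_t|$, and since $H$ is bounded by $1$ while $AH_x$ is a finite sum of terms each controlled by the per-state jump-rate bound $\sum_v z_v[\sum_u(m_{vu}+r_{vu}+d_v+(z_v-1)c_v)]$ established there, one gets the integrability needed to apply Fubini and differentiate under the expectation. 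I would therefore present the core as a short invocation of the standard generator-duality lemma, citing \cite{EK}, and devote the remaining care to verifying its hypotheses using Theorem~\ref{thm:generator_duality} and Lemma~\ref{lem:markov}; no genuinely new computation beyond the generator identity is required.
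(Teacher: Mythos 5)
Your proposal is correct and follows essentially the same route as the paper: the paper simply cites the standard generator-duality-to-moment-duality lemma (Proposition 1.2 of \cite{JK}, or equivalently Theorem 4.11 of \cite{EK}) together with Theorem~\ref{thm:generator_duality} and the finiteness/conservativity facts from Lemma~\ref{lem:markov}, which is precisely the interpolation argument via $s\mapsto \E[H(X_s,Z_{t-s})]$ that you spell out. Your additional care about integrability of $AH_x$ along the paths of $(Z_t)_{t\geq 0}$ is exactly the content of the hypotheses of those cited results, so nothing is missing.
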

 
 \begin{proof}
 This follows from Proposition 1.2 of \cite{JK} and Theorem \ref{thm:generator_duality}, since by our assumptions the rates are finite. Alternatively, using Theorem \ref{thm:generator_duality} and Lemma \ref{lem:markov} it is immediate to verify the conditions of Theorem 4.11 in \cite{EK}.
 \end{proof}
 
 The dual Markov process $(X_t)_{t\geq 0}$ can be explicitly represented as a $|V|$-dimensional jump diffusion.  Fix the parameters $\Lambda_v$, $D_v$, $R_{vu}$ and $M_{vu}$ in $\mathcal{M}[0,1].$ We define the following Poisson point processes (PPPs).\label{PPP's} For $v\in V$ let $N^{D_{v}}$ be a PPP on $(0,\infty)\times (0,1]$ with intensity measure $\frac{D_{v}(\dd y)}{y} \otimes \dd t $, and $N^{\Lambda_{v}}$ a PPP on $(0,\infty)\times [0,1] \times (0,1] $ with intensity measure $ \frac{\Lambda_{v}(\dd y)}{y^2} \otimes \dd t\otimes \dd \theta$. For $(v,u)\in V\times V$ let $N^{M_{vu}}$ be a PPP on $(0,\infty)\times (0,1]$ with intensity measure $\frac{M_{vu}(\dd y)}{y} \otimes \dd t $, and $N^{R_{vu}}$ a PPP on $(0,\infty)\times (0,1]$ with intensity measure $ \frac{R_{vu}(\dd y)}{y} \otimes \dd t$. Here, the notations $\dd t$, $\dd y$ and $\dd \theta$ stand for the Lebesgue measure on $[0,\infty)$ respectively $(0,1]$ and $[0,1]$. All PPPs involved are independent of each other and independent for different $v\in V$ respectively $(v,u)\in V\times V$. Let $(B_t)_{t\geq 0}=(B^{(v)}_t)_{t\geq 0, v\in V}$ be a $|V|$-dimensional standard Brownian motion independent of the PPPs. Then $(X_t)_{t\geq 0}=(X_t^{(v)})_{t\geq 0, v\in V}$ solves the system of SDEs 
 \begin{align}\label{eq:diffusion}
 \dd X_t^{(v)}=&\sum_{u\in V}\int_{y \in (0,1]} \big(y(X_{t-}^{(u)}-X_{t-}^{(v)})\big)  N^{M_{vu}}(\dd y,\dd t)\\ \nonumber
 &+\sum_{u\in V}  \int_{y \in (0,1]}\big( yX_{t-}^{(v)}(X_{t-}^{(u)}-1) \big) N^{R_{vu}}(\dd y,\dd t) \\ \nonumber
&+ \int_{y \in (0,1]} \big(y(1-X_{t-}^{(v)})\big)  N^{D_{v}}(\dd y,\dd t)\\ \nonumber
&+\int_{y \in (0,1]}\int_{\theta \in [0,1]} \big(y(\mathds 1_{\{\theta<X_{t-}^{(v)}\}}-X_t^{(v)})  \big)  N^{\Lambda_v}(\dd y, \dd \theta, \dd t)\\ \nonumber
&+\sum_{u\in V, u \neq v} (X_t^{(u)}-X_t^{(v)})  M_{vu}(\{0\})\dd t+\sum_{u\in V} X_t^{(v)}(X_t^{(u)}-1)  R_{vu}(\{0\})\dd t\\ \nonumber
&+(1-X_t^{(v)}) D_{v}(\{0\})\dd t+\sqrt{X_t^{(v)}(1-X_t^{(v)})}   \Lambda_v(\{0\})\dd B^{(v)}_t, \quad t\geq 0
 \end{align}
where $v\in V,$ with initial condition $X_0=(x^{(v)})_{v\in V}\in [0,1]^V.$ 

The above initial value problem gives a $|V|$-dimensional jump diffusions with non-Lipschitz coefficients. Existence and uniqueness results for such systems have recently drawn considerable interest, and we may refer to \cite{K, K1, BLP, xi2017jump} for existence and strong uniqueness results.

This dual process has an interpretation of the frequency process in the sense of population genetics, which is classical at least in the case without coordination. In that case, \eqref{eq:diffusion} reduces to 
\begin{eqnarray}\label{difclassic}
\dd X_t^{(v)}&=&\sum_{u\in V, u\neq v} (X_{t}^{(u)}-X_{t}^{(v)})  m_{vu}\dd t-\sum_{u\in V} X_{t}^{(v)}(1-X_{t}^{(u)})  r_{vu}\dd t\\& &
+(1-X_{t}^{(v)}) d_{v}\dd t+\sqrt{X_{t}^{(v)}(1-X_{t}^{(v)})}   c_v\dd B^{(v)}_t, \quad v\in V, t\geq 0.\nonumber
 \end{eqnarray}
The solution $(X_t)_{t\geq 0}$ of \eqref{difclassic} can then be understood as the frequency of one genetic type in a two-type population living in a structured environment of $|V|$ islands. More precisely, it is the stepping stone model with mutation and selection, see \cite{Kimura}. Denote the two types by $-$ or $+.$ Then \eqref{difclassic} describes the dynamics under the following assumptions:
 \begin{enumerate}
 \item $ m_{vu}$ is the rate at which individuals of island $v$ migrate to island $u$ (migration). 
 \item $ r_{vu}$ measures the selective disadvantage of type $-$ individuals situated on island $v$ against type $+$ individuals situated on island $u$ (selection). Note that the term $r_{vu}$ for $v \neq u$ is less classical than the one $r_{vv}$, nevertheless it receives an analogous interpretation,
 \item $ d_{v}$ is the rate at which individuals of type $+$ change into individuals of type $-$ on island $v$ (mutation from $+$ to $-$). 
 \item $ c_{v}$ measures the strength of the random genetic drift in island $v$. 
\end{enumerate}
In our dual process in \eqref{eq:diffusion}, the role of general measures, as opposed to Dirac measures at $0$, is compatible with this classical interpretation, which is now enriched by the possibility of large events that affect a positive fraction of the population.  Large migration events are considered for example in the seed-bank model with simultaneous switching, see \cite{BGKW18}.

\subsection{The nested coalescent and its dual}\label{sec:nested}
The nested coalescent is an object introduced recently \cite{BDLS}, which has already received some attention \cite{BRSS, D, LS}. Its purpose is to integrate speciation events and individual reproduction in the same model, in order to be able to trace ancestry at the level of species. Species can be regarded as islands (in the sense of a classical structured coalescent), meaning that individual ancestral lines inside each species coalesce according to some measure $\Lambda$, for example at pairwise rate one, just as in the Kingman coalescent. The difference is that species also perform a Kingman coalescent of their own, and when two species coalesce, the ancestral lines inside them are allowed to coalesce, again at pairwise rate one. Thus the nested coalescent consists of (independent) coalescents at individual level, nested inside an \lq external\rq\ coalescent at species level.  
In our framework, the block-counting process of the nested coalescent is given by choosing $\Lambda_v\in \mathcal{M}[0,1]$, $D_v=0$, $v\in V$, $R_{vu}=0$ and $M_{vu}=\delta_1, v\neq u$ in Definition \ref{def:random_rate}. The resulting process $(Z_t)_{t\geq 0}$ on $\N_0^V$ is given by the jumps

\begin{equation}
z \mapsto \left\{ \begin{array}{ll}
z+z_v(-e_v+ e_u), & \textrm{ at rate }1, \text{ for each } v,u\in V, v\neq u \\
z- e_v, & \textrm{ at rate } \Lambda_v(\{0\})\binom{z_v}{2}+\int_0^1\binom{z_v}{2}y(1-y)^{z_v-1}\frac{\Lambda_{v}(\dd y)}{y^2} \\
z- je_v, & \textrm{ at rate } \int_0^1\binom{z_v}{j+1}y^j(1-y)^{z_v-j}\frac{\Lambda_{v}(\dd y)}{y^2}, \qquad \textrm{ for }j \geq 2.
\end{array}
\right .
\end{equation}

If we impose $\Lambda_v=\Lambda$ for all $v\in V$ and ignore the empty islands, then up to labelling, $(Z_t)_{t\geq 0}$ is the block-counting process of a nested coalescent with individual $\Lambda$-coalescent and species Kingman coalescent. We now define the moment dual of the nested coalescent, which to our knowledge has not yet been introduced in the literature.

\begin{definition}[The nested Moran model]\label{def:nested_moran}
 Fix parameters $\Lambda_v\in \mathcal{M}[0,1]$, $D_v=0$, $R_{vu}=0$ and $M_{vu}=\delta_1$. Let $(X_t)_{t \geq 0}=(X_t^{(v)})_{t\geq 0, v\in V}$ be the solution of
 \begin{eqnarray}\label{diffusionmoran}
\dd X_t^{(v)}&=&\sum_{u \colon u\neq v} (X_{t-}^{(v)}-X_{t-}^{(u)}) \widetilde N^{M_{vu}}(\dd t)+\int_{y \in (0,1]} \int_{\theta \in [0,1]} y(\mathds 1_{\{\theta<X_{t-}^{(v)}\}}-X_{t-}^{(v)})    N^{\Lambda_v}(\dd y, \dd \theta,\dd t) \nonumber
\\&&+\sqrt{X_t^{(v)}(1-X_t^{(v)})}  \Lambda_v(\{0\}) \dd B^{(v)}_t, \quad t\geq 0\nonumber
 \end{eqnarray}
for $v\in V,$ where $N^{\Lambda_v}$ are independent Poisson point processes as in \eqref{eq:diffusion}, $\widetilde N^{M_{vu}}$ are independent Poisson point processes on $[0,\infty)$ with intensity $\dd t$ that are also independent of $\{ N^{\Lambda_v} \colon v \in V \}$, and $
(B_t^{(v)})_{t\geq 0, v\in V}$ is a standard $|V|$-dimensional Brownian motion independent of these Poisson point processes.  We call $(X_t)_{t\geq 0}$ the \emph{nested Moran model} with parameters $\Lambda_v, v\in V$.
\end{definition}

The relation between the nested Moran model and the classical Moran model becomes clear if one considers an initial condition $X_0=(X_0^{(v)})\in\{0,1\}^V$. Observe that in this case equation \eqref{diffusionmoran} reduces to
 \begin{eqnarray}
\dd X_t^{(v)}&=&\sum_{u: u\neq v} (X_{t-}^{(v)}-X_{t-}^{(u)})  \widetilde N^{M_{vu}} (\dd t).
 \end{eqnarray}
The connection becomes clear after observing that $\frac{1}{|V|}\sum_{v\in V} X_t^{(v)}$ is the frequency process of a Moran model with population size $|V|$. 

Further, considering Example~\ref{example-hierarchical} in Section~\ref{sec-modeldef}, we see that the nested Moran model is similar to the hierarchical Moran model in case all branching and death measures are zero. A substantial difference is that in the hierarchical case, there is also independent migration apart from the completely coordinated one, i.e., $M_{vu}=c'\delta_0+c''\delta_1$ for some positive $c',c''$.

The word \emph{nested} may seem slightly misleading in the context of Defintion \ref{def:nested_moran}, as the object we introduce is not a family of Moran models correlated by a Moran model, but rather a family of jump diffusions correlated by a Moran model. We use the name \emph{nested Moran model} in order to emphasize that it arises as the moment dual of the nested coalescent. This is the content of the next result, which is an immediate corollary of Theorem \ref{duality}.

\begin{corollary}[The nested coalescent and its dual]
 Fix parameters $\Lambda_v\in {\mathcal{M}}[0,1]$, $D_v=0$, $R_{vu}=0$ and $M_{vu}=\delta_1$. Then the block-counting process of the nested coalescent $(Z_t)_{t\geq 0}$ and the nested Moran model $(X_t)_{t\geq 0}$ with these parameters are moment duals, that is, for every $x \in [0,1]^V$, $z\in \N_0^V$ and $t>0$
\begin{equation}\label{duality_nested}
 \E_{x}[\prod_{v\in V} (X_t^{(v)})^{z_v}]= \E_{z}[\prod_{v\in V} (x_v)^{Z_t^{(v)}}].
 \end{equation}
\end{corollary}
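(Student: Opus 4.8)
The plan is to read the corollary as a direct specialization of the general moment duality established in Corollary \ref{duality}, which already holds for arbitrary finite measures $\Lambda_v,D_v,R_{vu},M_{vu}\in\mathcal M[0,1]$ because all rates are finite. Hence the only work is to confirm that the two processes named here are precisely the process $(Z_t)_{t\geq 0}$ of Definition \ref{def:random_rate} and the generator-$B$ process $(X_t)_{t\geq 0}$, each evaluated at the parameter choice $\Lambda_v\in\mathcal M[0,1]$, $D_v=0$, $R_{vu}=0$, $M_{vu}=\delta_1$. Once both identifications are in place, \eqref{duality_nested} is nothing but \eqref{eq:duality} with the duality function $H(x,z)=\prod_{v\in V}x_v^{z_v}$.

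For the block-counting process this identification was already carried out above when the nested coalescent was introduced: substituting $M_{vu}=\delta_1$ into the migration rate of Definition \ref{def:random_rate} forces $i=z_v$, since at $y=1$ the factor $(1-y)^{z_v-i}$ vanishes unless $i=z_v$; thus the only surviving migration transition is $z\mapsto z-z_ve_v+z_ve_u$ at rate one, i.e.\ all blocks on island $v$ move simultaneously to island $u$. With $D_v=R_{vu}=0$ the death and reproduction transitions vanish, and the general measure $\Lambda_v$ produces the usual within-island $\Lambda$-coalescent jumps. Therefore $(Z_t)_{t\geq 0}$ is exactly the process of Definition \ref{def:random_rate} for these parameters.

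It remains to check that the nested Moran model of Definition \ref{def:nested_moran} is the generator-$B$ process \eqref{eq:diffusion} for the same parameters, and I would do this by straightforward substitution. Setting $D_v=0$ and $R_{vu}=0$ deletes the death and reproduction integrals in \eqref{eq:diffusion} together with the drift terms carried by $D_v(\{0\})$ and $R_{vu}(\{0\})$. Since $M_{vu}=\delta_1$ has no atom at $0$, the migration drift $M_{vu}(\{0\})\,\dd t$ also disappears, while the intensity $M_{vu}(\dd y)/y=\delta_1(\dd y)$ concentrates the migration integral at $y=1$ and collapses it to a rate-one jump replacing $X^{(v)}$ by $X^{(u)}$; this is exactly the term driven by $\widetilde N^{M_{vu}}$ in \eqref{diffusionmoran}. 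The coalescence contribution is copied verbatim: the $N^{\Lambda_v}$ jump integral of \eqref{eq:diffusion} together with, from the atom $\Lambda_v(\{0\})$, the Wright--Fisher diffusion term $\sqrt{X_t^{(v)}(1-X_t^{(v)})}\,\Lambda_v(\{0\})\,\dd B_t^{(v)}$ of \eqref{eq:WFdiffusion}. Hence \eqref{diffusionmoran} coincides with \eqref{eq:diffusion} for these parameters and $(X_t)_{t\geq 0}$ has generator $B$.

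With both identifications established, Corollary \ref{duality} applies verbatim and delivers \eqref{duality_nested}. The only genuine, though still routine, point is the bookkeeping in these two specializations, above all verifying that the $\delta_1$-migration integral collapses to the correct rate-one replacement jump and that the atom $\Lambda_v(\{0\})$ reproduces the Wright--Fisher term; everything else follows immediately from Corollary \ref{duality}.
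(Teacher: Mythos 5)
Your proof is correct and takes the same route the paper does: the paper presents this result as an immediate consequence of the general moment duality (Corollary \ref{duality}), and your contribution is simply to verify in detail that the two named processes are the specializations of $(Z_t)_{t\geq 0}$ and of the generator-$B$ process to the parameters $\Lambda_v\in\mathcal M[0,1]$, $D_v=0$, $R_{vu}=0$, $M_{vu}=\delta_1$. The bookkeeping you carry out (the $\delta_1$-migration rate collapsing to a rate-one simultaneous relocation, and the atom $\Lambda_v(\{0\})$ producing the Wright--Fisher term) is exactly the identification the paper leaves implicit.
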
 

\begin{remark}
It seems plausible to generalize this construction to species  $\Lambda$-coalescents and even to more general nested coalescents (see \cite{D}) considering the Poisson processes governing the migration to be exchangeable instead of independent. 
\end{remark}


We now provide a criterion for the nested coalescent to come down from infinity, whose proof is based on the moment duality. Let us first recall the following notions related to coming down from infinity. We set $\bar z:=(z,z,...,z)$ and $\bar x:=(x,x,...,x)$, for some $z\in\N$ and $x\in [0,1]$, where both $\bar z$ and $\bar x$ have $|V|$ coordinates. Further, for $w \in \N_0^V$ we put $|w|:=\Vert w \Vert_1=\sum_{i=1}^n w_i$. 
\begin{definition}\label{def:comingdown}
We say that the structured branching coalescing process $(Z_t)_{t \geq 0}$ \emph{immediately comes down from infinity} if $ \lim_{m\rightarrow \infty} \lim_{z\rightarrow \infty} \p_{\bar z}(|Z_t|<m)=1$ for every $t>0$, it \emph{comes down from infinity} if $ \lim_{m\rightarrow \infty} \lim_{z\rightarrow \infty} \p_{\bar z}(|Z_t|<m)>0$ for every $t> 0$ and it \emph{does not come down from infinity} if $ \lim_{m\rightarrow \infty} \lim_{z\rightarrow \infty} \p_{\bar z}(|Z_t|<m)=0$ for every $t>0$.
\end{definition} Thanks to the strong Markov property, these three cases cover all possibilities, i.e., it cannot happen that $ \lim_{m\rightarrow \infty} \lim_{z\rightarrow \infty} \p_{\bar z}(|Z_t|<m)$ is zero for small $t$ but positive for large $t$, and it is also impossible that it is less than one for small $t$ but equal to one for large $t$. Further, thanks to the strong Markov property and the fact that the total number of particles of the nested coalescent is decreasing in $t$, coming down from infinity for this process implies that $\P(\limsup_{t \to \infty} |Z_t|<\infty)=1$. 

Let us now again consider the particular case when $(Z_t)_{t \geq 0}$ is the nested coalescent, let $(X_t)_{t\geq 0}$ be the nested Moran model, and define
\[\tau:=\inf\{t>0: X_t=(1,1,...,1)\}.\]

For any measurable set $A \subseteq [0,\infty]$ and for any $t>0$, we write $\p_{\infty}(|Z_t|\in A)=\lim_{z\rightarrow \infty}\p_{\bar z}(|Z_t|\in A)$. We have the following lemma. 
\begin{lemma}\label{lem:nocomingdown}
For all $t >0$, $\p_{\infty}(|Z_t|<\infty)= \sup_{x\in (0,1)}  \p_{\bar x}(\tau<t).$
\end{lemma}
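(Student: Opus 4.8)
The plan is to read off the claimed identity from the moment duality of Corollary~\ref{duality} by specialising to the spatially homogeneous data $\bar x=(x,\dots,x)$ and $\bar z=(z,\dots,z)$ and then sending $z\to\infty$ and afterwards $x\uparrow 1$. Concretely, I would first substitute $x=\bar x$ and $z=\bar z$ into \eqref{eq:duality}. Since $H(\bar x,Z_t)=\prod_{v\in V}x^{Z_t^{(v)}}=x^{|Z_t|}$ and $H(X_t,\bar z)=\prod_{v\in V}(X_t^{(v)})^{z}$, this gives, for every $x\in(0,1)$ and every $z\in\N$,
\[ \E_{\bar x}\Big[\prod_{v\in V}\big(X_t^{(v)}\big)^{z}\Big]=\E_{\bar z}\big[x^{|Z_t|}\big]. \]

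Next I would let $z\to\infty$. On the left-hand side each factor $(X_t^{(v)})^{z}\in[0,1]$ converges to $\mathds 1_{\{X_t^{(v)}=1\}}$, so $\prod_{v}(X_t^{(v)})^{z}\to\mathds 1_{\{X_t=(1,\dots,1)\}}$ pointwise and bounded convergence yields $\p_{\bar x}(X_t=(1,\dots,1))$. On the right-hand side I write $\E_{\bar z}[x^{|Z_t|}]=\sum_{k\ge0}x^k\,\p_{\bar z}(|Z_t|=k)$. Because coalescence only decreases $|Z_t|$ while the fully coordinated migration $M_{vu}=\delta_1$ leaves the total number of blocks unchanged, a standard monotone coupling in the initial number of blocks shows that $z\mapsto\p_{\bar z}(|Z_t|\ge m)$ is nondecreasing, so each limit $\p_\infty(|Z_t|=k)=\lim_{z\to\infty}\p_{\bar z}(|Z_t|=k)$ exists. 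As $x\in(0,1)$ the summands are dominated by the summable sequence $(x^k)_k$, so dominated convergence for series gives
\[ \p_{\bar x}\big(X_t=(1,\dots,1)\big)=\sum_{k\ge0}x^k\,\p_\infty(|Z_t|=k),\qquad x\in(0,1). \]

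Now I would send $x\uparrow 1$. The right-hand side is a power series with nonnegative coefficients, hence nondecreasing in $x$, and by Abel's theorem (monotone convergence) its supremum over $(0,1)$ equals $\sum_{k\ge0}\p_\infty(|Z_t|=k)=\p_\infty(|Z_t|<\infty)$. In particular $x\mapsto\p_{\bar x}(X_t=(1,\dots,1))$ is nondecreasing and $\sup_{x\in(0,1)}\p_{\bar x}(X_t=(1,\dots,1))=\p_\infty(|Z_t|<\infty)$. Finally, substituting $X=(1,\dots,1)$ into \eqref{diffusionmoran} makes every drift, jump and diffusion increment vanish, so $(1,\dots,1)$ is absorbing; thus $\{X_t=(1,\dots,1)\}=\{\tau\le t\}$ and
\[ \p_\infty(|Z_t|<\infty)=\sup_{x\in(0,1)}\p_{\bar x}(\tau\le t). \]

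The one genuine subtlety — and what I expect to be the main obstacle — is upgrading $\tau\le t$ to $\tau<t$. One clean route is to show that $\tau$ has no atom at the deterministic time $t$: under $\p_{\bar x}$ the time $\tau$ is either a jump time of one of the driving Poisson processes, which almost surely avoid the fixed time $t$, or a time at which the continuous Wright--Fisher part of \eqref{diffusionmoran} drives the configuration to the corner $(1,\dots,1)$, and such boundary hitting times are atomless at a fixed $t$; hence $\p_{\bar x}(\tau=t)=0$ and $\p_{\bar x}(\tau<t)=\p_{\bar x}(\tau\le t)$ for every $x$, which closes the argument. Alternatively, one can avoid atom estimates by noting that the identity $\p_\infty(|Z_s|<\infty)=\sup_x\p_{\bar x}(\tau\le s)$ holds for \emph{every} $s$, then using $\{\tau<t\}=\bigcup_{s<t}\{\tau\le s\}$ to get $\sup_x\p_{\bar x}(\tau<t)=\sup_{s<t}\p_\infty(|Z_s|<\infty)$, and finally invoking that $s\mapsto\p_\infty(|Z_s|<\infty)$ is nondecreasing (coming down is permanent since $|Z_s|$ is nonincreasing) and left-continuous at $t$; the left-continuity — equivalently, that no mass comes down from infinity exactly at the deterministic time $t$ — is the delicate point that this second route leaves to be verified, while the duality computation of the first three steps is routine.
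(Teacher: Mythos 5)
Your proposal is correct and follows essentially the same route as the paper: apply the moment duality to $\bar x$ and $\bar z$, send $z\to\infty$ to get $\p_{\bar x}(X_t=(1,\dots,1))=\lim_z\E_{\bar z}[x^{|Z_t|}]$, and then let $x\uparrow 1$; your Abel-summation packaging is just a cleaner version of the paper's explicit sandwich with $x=1-m^{-1/2}$ and $x=1-m^{-2}$. The only divergence is that you flag the $\{\tau\le t\}$ versus $\{\tau<t\}$ distinction, which the paper silently conflates by writing $\p_{\bar x}(\tau<t)=\p_{\bar x}(X_t=(1,\dots,1))$ outright, so your extra care there goes beyond (rather than falls short of) the published argument.
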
 
\begin{proof}
 Note that for $x \in (0,1)$,
$$
 \p_{\bar x}(\tau<t)=\p_{\bar x}(X_t=(1,...,1))=\lim_{z\rightarrow \infty}\E_{\bar x}[\prod_{v\in V} (X_t^{(v)})^{z}]=\lim_{z\rightarrow \infty}\E_{\bar z}[\prod_{v\in  V} x^{Z_t^{(v)}}]
$$
where we have used the duality in the last equation. This implies that
$$
\lim_{z\rightarrow \infty}\p_{\bar z}(|Z_t|<m)+x^m\geq  \p_{\bar x}(\tau<t)\geq x^m\lim_{z\rightarrow \infty}\p_{\bar z}(|Z_t|<m).
$$
Taking $x=1-m^{-1/2}$, one gets that
$$
\lim_{z\rightarrow \infty}\p_{\bar z}(|Z_t|<m)+\mathrm e^{-\sqrt{m}} \geq  \p_{\overline{ 1-m^{-1/2}}}(\tau<t).
$$
After observing that $\p_{\bar x}(\tau<t)$ is an increasing function of $x$, we conclude that 
$$
\liminf_{m\rightarrow \infty} \lim_{z\rightarrow \infty} \p_{\bar z}(|Z_t|<m)\geq \sup_{x\in (0,1)}  \p_{\bar x}(\tau<t).
$$
Now take $x=1-m^{-2}$ and observe that  
$$\p_{\overline{1-m^{-2}}}(\tau<t)\geq \mathrm e^{1/m}\lim_{z\rightarrow \infty}\p_{\bar z}(|Z_t|<m),$$
and taking $m$ to infinity this implies that
$$ \sup_{x\in (0,1)}  \p_{\bar x}(\tau<t)\geq \limsup_{m\rightarrow \infty}\lim_{z\rightarrow \infty}\p_{\bar z}(|Z_t|<m).$$
\end{proof}
Now the proof of the following corollary is trivial.
\begin{corollary}\label{cor:nocomingdown}
 The nested coalescent immediately comes down from infinity if,  for every $t>0$, $\sup_{x\in (0,1)} \p_{\bar x}(\tau<t)=1$, comes down from infinity if,  for every $t>0$, $\sup_{x\in (0,1)} \p_{\bar x}(\tau<t)>0$  and does not come down from infinity if,  for every $t>0$, $\sup_{x\in (0,1)}  \p_{\bar x}(\tau<t)=0$.
 \end{corollary}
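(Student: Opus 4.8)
The plan is to derive Corollary \ref{cor:nocomingdown} directly from Lemma \ref{lem:nocomingdown} by unwinding the definitions of the three coming-down-from-infinity regimes in Definition \ref{def:comingdown}. The key observation is that Lemma \ref{lem:nocomingdown} already provides the exact identity
\[
\p_{\infty}(|Z_t|<\infty)=\lim_{z\to\infty}\p_{\bar z}(|Z_t|<\infty)=\sup_{x\in(0,1)}\p_{\bar x}(\tau<t),
\]
so the entire content of the corollary reduces to comparing $\p_{\infty}(|Z_t|<\infty)$ with the quantity $\lim_{m\to\infty}\lim_{z\to\infty}\p_{\bar z}(|Z_t|<m)$ appearing in Definition \ref{def:comingdown}. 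Since $\{|Z_t|<m\}\uparrow\{|Z_t|<\infty\}$ as $m\to\infty$, monotone continuity of probability measures gives $\lim_{m\to\infty}\lim_{z\to\infty}\p_{\bar z}(|Z_t|<m)=\lim_{z\to\infty}\p_{\bar z}(|Z_t|<\infty)$, once one justifies interchanging the two limits (which is exactly what the two-sided squeeze in the proof of Lemma \ref{lem:nocomingdown} establishes, identifying the $\liminf$ and $\limsup$ in $m$ with the common value $\sup_{x\in(0,1)}\p_{\bar x}(\tau<t)$).

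Granting this identity, I would treat the three cases in turn. For the \emph{does not come down from infinity} case, the criterion $\sup_{x\in(0,1)}\p_{\bar x}(\tau<t)=0$ is by the lemma equivalent to $\lim_{m\to\infty}\lim_{z\to\infty}\p_{\bar z}(|Z_t|<m)=0$, which is precisely the definition. Symmetrically, for the \emph{comes down from infinity} case the condition $\sup_{x}\p_{\bar x}(\tau<t)>0$ matches $\lim_{m\to\infty}\lim_{z\to\infty}\p_{\bar z}(|Z_t|<m)>0$. For the \emph{immediate} case, $\sup_{x}\p_{\bar x}(\tau<t)=1$ corresponds to $\lim_{m\to\infty}\lim_{z\to\infty}\p_{\bar z}(|Z_t|<m)=1$; here I would note that the supremum being equal to $1$ (even if not attained) suffices, since the limiting quantity equals this supremum exactly. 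Because these three alternatives exhaust all possibilities—as remarked after Definition \ref{def:comingdown} via the strong Markov property—no further case analysis is needed.

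The only technical point requiring care is the interchange of the limits in $m$ and $z$, i.e.\ confirming that $\lim_{m\to\infty}\lim_{z\to\infty}\p_{\bar z}(|Z_t|<m)$ genuinely equals $\sup_{x\in(0,1)}\p_{\bar x}(\tau<t)$ rather than merely being bounded above or below by it. This is exactly the content of the two inequalities derived in the proof of Lemma \ref{lem:nocomingdown}: the choice $x=1-m^{-1/2}$ yields $\liminf_{m}\lim_{z}\p_{\bar z}(|Z_t|<m)\geq\sup_x\p_{\bar x}(\tau<t)$, while $x=1-m^{-2}$ yields the reverse bound for the $\limsup$. Since the $\liminf$ and $\limsup$ are squeezed to the common value $\sup_x\p_{\bar x}(\tau<t)$, the double limit exists and equals it. I expect this squeeze—already carried out in the lemma—to be the genuine analytic heart of the argument; once it is in hand, the corollary is, as the authors say, trivial, amounting to reading off the three threshold conditions ($=0$, $>0$, $=1$) from the established equality.
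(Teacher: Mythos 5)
Your proposal is correct and follows essentially the same route as the paper: the paper's proof likewise reduces the corollary to Lemma \ref{lem:nocomingdown} by identifying $\lim_{m\to\infty}\lim_{z\to\infty}\p_{\bar z}(|Z_t|<m)$ with $\p_\infty(|Z_t|<\infty)=\sup_{x\in(0,1)}\p_{\bar x}(\tau<t)$ and then reading off the three threshold cases. The only (cosmetic) difference is that the paper justifies the limit identification by ``continuity of measures'' applied to $\p_\infty$, whereas you point directly to the two-sided squeeze already carried out in the proof of the lemma; both are valid.
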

 \begin{proof}
 The corollary follows directly from Lemma~\ref{lem:nocomingdown}. Indeed, by continuity of measures,
 \[ \P_{\infty}(|Z_t|<\infty)=\lim_{m\to\infty}\P_{\infty}(|Z_t|<m)=\lim_{m\to\infty}\lim_{z\to\infty} \P_{\bar z}(|Z_t|<m). \]
 \end{proof}
We note that in \cite{BDLS}, an equivalent condition for coming down from infinity for the nested coalescent was found, where the marginal coalescent of species can be an arbitrary $\Lambda$-coalescent instead of a Kingman one. Here, the only assumption on the measures corresponding to the $\Lambda$-coalescent of individual ancestral lines  (called `marginal gene coalescent' in that paper) and the one of species is that they have no mass at one, i.e., the probability that all species or all individuals of a given species merge simultaneously is zero. It was shown in \cite[Proposition 6.1]{BDLS} that such a nested coalescent comes down from infinity if and only if both the marginal gene coalescent and the marginal species coalescent both come down from infinity. Our Corollary~\ref{cor:nocomingdown} provides a simple alternative condition for coming down from infinity in the case when the marginal species coalescent is Kingman, using moment duality, including also the case when $\Lambda_v(\{ 1\})>0$. We expect that this result also extends to the case of a general $\Lambda$-coalescent for the species, but we refrain from presenting details.

This approach via duality may be applied to other models in the literature, like the Kingman coalescent with erosion \cite{FLS} that was introduced in Example~\ref{example-hierarchical} in Section~\ref{sec-modeldef}, but we defer such investigations to future work.

\subsection{Fixation of the advantageous trait in models with selection}

Being able to manipulate different mechanisms in the same mathematical framework allows us to translate known results about one mechanisms to find new results about a different mechanism, and also to come up with comparison arguments involving seemingly unrelated behaviours. 
First, we provide a simple equivalent condition on coming down from infinity for one-dimensional processes that exhibit no coalescence but death, using a comparison between these two effects. Thanks to moment duality, this has implications regarding fixation in the moment dual of the process.  Second, by comparing migration with death and using moment duality, we show that there are several examples of selection that lead to almost sure fixation in a structured population. The latter result is new and seems to be interesting from a biological perspective. 


\begin{example}[Coming down from infinity without coalescence]
In \cite{S}, a necessary and sufficient condition for coming down from infinity for $\Lambda$-coalescents was provided. For structured processes with coalescence, the results of \cite{BDLS,BGKW18} show that different coalescance mechanisms at different vertices and coordinated migration can change the behaviour of the process with this respect radically, see also Corollary~\ref{cor:nocomingdown} in the present paper. It is natural to ask whether coming down from infinity is possible for structured branching coalescing processes exhibiting no coalescence but only death, migration and reproduction. While we expect that the answer to this question is still nontrivial due to a competition between death and migration if the set $V$ has at least two elements, for $V=\{ v \}$ the answer is straightforward and easy to verify, based on a comparison between death and coalescence.

Let us recall the notion of coming down from infinity, coming down from infinity immediately, and not coming down from infinity from Definition~\ref{def:comingdown}. We are interested in the case $V=\{ v \}$, where we ignore the index $v$ in the nomenclature of the corresponding measures and the process, writing simply $D,R,\Lambda$ and $ (Z_t)_{t \geq 0}$. 

\begin{proposition}\label{prop:nocomingdownD}
Assume that $|V|=1$ and $\Lambda=0$. Then the structured branching coalescing process $(Z_t)_{t \geq 0}$ comes down from infinity if and only if $D$ has an atom at one.
\end{proposition}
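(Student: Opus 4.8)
The plan is to follow the moment-duality strategy used for the nested coalescent in Lemma~\ref{lem:nocomingdown}. Since $V=\{v\}$, the duality function is $H(x,z)=x^z$, and Corollary~\ref{duality} gives $\E_z[x^{Z_t}]=\E_x[X_t^z]$ for every $x\in[0,1]$, where $(X_t)_{t\ge0}$ is the dual process of \eqref{eq:diffusion} on $[0,1]$ with parameters $\Lambda=0$, $D$, $R$. First I would record the elementary sandwich bounds $x^{m-1}\P_z(Z_t<m)\le \E_z[x^{Z_t}]\le \P_z(Z_t<m)+x^m$, valid for $x\in(0,1)$, exactly as in the proof of Lemma~\ref{lem:nocomingdown}. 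Letting $z\to\infty$ and using $X_t^z\to\mathds 1_{\{X_t=1\}}$ together with dominated convergence, the same argument (choosing $x$ depending on $m$ as in that lemma) yields
\[
\sup_{x\in(0,1)}\P_x(X_t=1)\;=\;\lim_{m\to\infty}\lim_{z\to\infty}\P_z(Z_t<m),
\]
so that $(Z_t)_{t\ge0}$ comes down from infinity if and only if $\sup_{x\in(0,1)}\P_x(X_t=1)>0$. This reduces the statement to deciding whether the dual process can reach the absorbing boundary point $1$.

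Next I would analyse the boundary behaviour of $(X_t)_{t\ge0}$. Writing $W_t:=1-X_t$ and reading off the jumps from \eqref{eq:dual-death} and \eqref{eq:dual-branching}, a death event of impact $y$ multiplies $W$ by $(1-y)$, a reproduction event of impact $y$ multiplies $W$ by $(1+yX_{t-})\ge1$, and the atoms at $0$ contribute only a drift that keeps $W>0$. Hence $W_t>0$ unless some death event has $y=1$, and one checks directly that $x=1$ is absorbing. Consequently $\{X_t=1\}$ coincides, up to a null set, with the event that a death impact $y=1$ has occurred before time $t$; by the construction of $N^{D}$ such impacts form a Poisson process of rate $D(\{1\})$.

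In the case $D(\{1\})>0$, the first such impact occurs at an $\mathrm{Exp}(D(\{1\}))$ time and sends $X$ to $1$, so $\P_x(X_t=1)\ge 1-\mathrm e^{-D(\{1\})t}>0$ for every $x\in(0,1)$ and every $t>0$; hence the process comes down from infinity. In the case $D(\{1\})=0$, no jump factor equals $0$, so $W_t=0$ would force $\sum_i-\log(1-y_i)=\infty$ over the death impacts $y_i$ in $[0,t]$. I would rule this out using the standard criterion that a sum $\sum_i g(y_i)$ over a Poisson point process of intensity $y^{-1}D(\dd y)\otimes\dd t$ is a.s.\ finite provided $\int_0^1\big(1\wedge g(y)\big)\,y^{-1}D(\dd y)<\infty$; with $g(y)=-\log(1-y)$ the function $\big(1\wedge(-\log(1-y))\big)/y$ is bounded on $(0,1)$, so the integral is at most a constant times $d=D([0,1])<\infty$. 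Thus $\sum_i-\log(1-y_i)<\infty$ a.s.\ for every $t$, whence $W_t>0$ a.s.\ and $\P_x(X_t=1)=0$ for all $x\in(0,1)$, giving that $(Z_t)_{t\ge0}$ does not come down from infinity.

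The main obstacle is precisely the case $D(\{1\})=0$: one must show that an accumulation of death impacts with $y$ close to (but not equal to) $1$, possibly together with infinitely many small-$y$ impacts when $\int_0^1 y^{-1}D(\dd y)=\infty$, still cannot drive $W_t=1-X_t$ to $0$ in finite time. This is where the Poisson a.s.-finiteness estimate above is needed, and where one verifies that reproduction and the atoms at $0$ only push $X$ away from $1$, so they can never help the dual reach the boundary (this also explains why general $R$ is harmless). The comparison with coalescence alluded to in the statement enters through the observation that the death jump $x\mapsto x+y(1-x)$ of the dual is exactly the upward move appearing in the coalescent dual \eqref{eq:dual-coalescence}, so that reaching $1$ plays here the role that reaching $(1,\dots,1)$ plays for the nested coalescent in Lemma~\ref{lem:nocomingdown}.
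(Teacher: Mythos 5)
Your argument is correct, but it takes a genuinely different route from the paper's. You turn the duality characterization \eqref{CDFI} (which the paper only states as a remark after the proposition) into the engine of the whole proof: coming down from infinity is equivalent to the dual hitting $1$, and you decide this by a pathwise analysis of $W_t=1-X_t$, which is multiplied by $(1-y)$ at each death impact and by factors $\geq 1$ at reproduction impacts, so that $\{X_t=1\}$ occurs essentially iff a death impact with $y=1$ arrives; your Campbell-type bound $\int_0^1\big(1\wedge(-\log(1-y))\big)y^{-1}D(\dd y)\leq C\,d<\infty$ correctly rules out an accumulation of impacts $y<1$ driving $W$ to $0$, even when $\int_0^1 y^{-1}D(\dd y)=\infty$. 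The paper instead argues at the level of the particle system: sufficiency is the same direct extinction observation, but for necessity it reduces to $R=0$, transfers $D$ into a coalescence measure via \eqref{DLambdacoupling}, notes that $\int_{(0,1]}\widehat\Lambda(\dd y)/y<\infty$ so the $\widehat\Lambda$-coalescent does not come down from infinity by \cite[Theorem 8]{pitman}, invokes the variant from the proof of \cite[Corollary 2]{S} with downward jumps of size $k$ rather than $k-1$, and handles the $\delta_0$-part separately through the explicit binomial law of the pure death chain, using linearity in $D$ to combine the cases. Your version is more self-contained (no input from coalescent theory) and makes the role of the atom at $1$ completely transparent; the price is that it leans on the well-posedness and Poissonian pathwise construction of the dual SDE \eqref{eq:diffusion} --- in particular the validity of the multiplicative decomposition of $W_t$ when there are infinitely many death impacts in finite time --- which the paper only delegates to the literature, whereas the paper's comparison argument never needs the dual except in the easy $D=\delta_0$ case.
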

Before proving this proposition, let us explain some of its consequences. In case $D=\delta_1$, starting from an infinite number of particles, the process is extinguished (i.e., absorbed at zero) after an exponentially distributed time. Thanks to Proposition~\ref{prop:nocomingdownD} and the linearity of rates in \eqref{eq:jumps_random} with respect to $D=D_v$, it follows that the same holds whenever $D$ has an atom at one, in particular coming down from infinity never happens immediately if $\Lambda=0$. Let us note that coming down from infinity up to time $t$ with positive probability is equivalent to reaching 1 with positive probability up to time $t$ for the moment dual $(X_t)_{t \geq 0}$ starting from any initial condition in $(0,1)$. Indeed, for $x \in (0,1)$ and $t>0$ we have
\[ \E_{\infty} [x^{Z_t}]=\lim_{n\to\infty} \E_{n} [x^{Z_t}] = \lim_{n\to\infty} \E_{x}[X_t^{n}] = \P_x(X_t=1), \numberthis\label{CDFI}\]
which is positive if and only if the process comes down from infinity. In case there is selection in the model (i.e., $R \neq 0$), reaching 1 can be interpreted as fixation of the advantageous trait.
\begin{proof}[Proof of Proposition~\ref{prop:nocomingdownD}]
We will study three cases, when there is mass at one, when the mass is concentrated at zero and when there is no mass at either one nor zero. We will combine these cases in order to obtain the whole spectrum.

First, it is clear that if $D$ has an atom at 1, then the process $(Z_t)_{t \geq 0}$
gets extinguished after an exponentially distributed time, and hence in particular it comes down from infinity (further, if $D$ is a multiple of $\delta_1$, then it stays infinite until the extinction). Thus, the condition of the proposition is sufficient for coming down from infinity. 

For the rest of the proof, let us assume that $D$ has no atom at 1. Our goal is to show that the process does not come down from infinity. Since reproduction can only increase the value of the process, we assume without loss of generality that $R=0$. 

Now, let us first consider the case when $D=\delta_0$. Our process $(Z_t)_{t \geq 0}$ is a pure death chain with jumps $n \to n-1$ at rate $n$. Hence, by \eqref{eq:diffusion}, the dual process $(X_t)_{t \geq 0}$ is deterministic, it equals the unique solution $(x(t))_{t \geq 0}$ of the ODE
\[ \frac{\dd}{\dd t} x(t)=1-x(t) \]
with $x(t)=(1-(1-x(0))\e^{-t})$, $t \geq 0$.
We observe that $\E_{n} [(x(0))^{Z_t}]=\E_x[X_t^n]=(x(0)\e^{-t}+(1-\e^{-t}))^n$, and thus $Z_t$ is a binomial random variable with parameter $n$ and $\e^{-t}$. Either from this or from Equation \eqref{CDFI}, we conclude that the process $(Z_t)_{t \geq 0}$ does not come down from infinity.

Second, if $D$ has no atom at zero, then we define a coalescence measure $\widehat \Lambda$ according to
\[ \frac{\widehat \Lambda(\dd y)}{y^2} = \frac{D(\dd y)}{y}, \qquad y \in (0,1]. \numberthis\label{DLambdacoupling} \]
Then, since $D$ is a finite measure, we have $\int_{(0,1]} \frac{\widehat \Lambda(\dd y)}{y}<\infty$. Consequently, by \cite[Theorem 8]{pitman}, the $\widehat \Lambda$-coalescent does not come down from infinity. 
According to the proof of \cite[Corollary 2]{S}, it is even true that if we define a process $(Y_t)_{t \geq 0}$ similarly to the block-counting chain of the $\widehat \Lambda$-coalescent but having downward jumps of size $k$ instead of $k-1$ at rate $\int_0^1 x^{k-2} (1-x)^{b-k} \widehat \Lambda(\dd x) $, given that there are $b \geq k$ blocks, then $(Y_t)_{t \geq 0}$ does not come down from infinity. Now, according to \eqref{eq:jumps_random} and \eqref{DLambdacoupling}, starting from $b \in \N$ blocks at the moment, the rate at which $(Y_t)_{t \geq 0}$ jumps to $b-k$, $k=2,\ldots,b$, is 
\[ \binom{b}{k} \int_0^1 x^{k-2} (1-x)^{b-k} \widehat \Lambda(\dd x) = \binom{b}{k}  \int_0^1 x^{k-1} (1-x)^{b-k} D(\dd x). \]
Now, thanks to \eqref{eq:jumps_random}, started from $b \in \N$, our process $(Z_t)_{t \geq 0}$ has the same jump rates as $(Y_t)_{t \geq 0}$, plus additionally downward jumps of size 1 at rate
\[ b \int_0^1 (1-x)^{b-1} D(\dd x). \numberthis\label{singledeath} \]
This additional rate however depends linearly on $b$. Hence, using the same arguments as for $D=\delta_0$, one can easily verify that $(Z_t)_{t \geq 0}$ does not come down from infinity. The case of a general $D$ (without an atom at 1) follows from the linearity of the transition rates in the second line of \eqref{eq:jumps_random} with respect to the death measure $D=D_v$.
\end{proof}
\end{example}
\begin{remark}
As we have seen, given $D$, the construction \eqref{DLambdacoupling} results in a coalescence measure $\widehat \Lambda$ satisfying $\int_{(0,1]} \frac{\widehat \Lambda(\dd y)}{y}<\infty$. \cite[Theorem 8]{pitman} implies that the associated $\widehat \Lambda$-coalescent has dust, i.e.~if it is started from an infinite number of blocks, then there is a positive proportion of singletons for any $t>0$. This is a stronger condition than not coming down from infinity. E.g., the Bolthausen--Sznitman coalescent $\widehat \Lambda(\dd y)=\dd y$ has no dust, but it does not come down from infinity (cf.~\cite{S}).

Conversely, one could think of defining a death measure $D$ according to \eqref{DLambdacoupling} given a coalescence measure $\widehat \Lambda$ having no atom at zero. Then, for a process satisfying Definition~\ref{def:random_rate} and having zero death measure and coalescence measure $\widehat\Lambda$, we can consider the process where instead of coalescence according to $\widehat \Lambda$ there is death according to $D$, whereas reproduction is unchanged. This process, if it is well-defined, dominates the process with coalescence stochastically from below. However, the measure $D$ is only finite if $\int_{(0,1]} \frac{\widehat \Lambda(\dd y)}{y}<\infty$. Otherwise, the death rate \eqref{singledeath} of single individuals equals $+\infty$, and one can intuitively say that the process with death instead of coalescence is the degenerate process jumping to zero immediately after time $t=0$, whatever the initial condition is.
\end{remark}

\begin{example}\label{example-peripatric}
Now, consider the peripatric coalescent $(Z_t^{(1)},Z_t^{(2)})_{t\geq 0}$ defined analogously to \cite{LM}, but with corordinated migration (instead of independent one). This is one of the processes satisfying Definition~\ref{def:random_rate}, with $V=\{ 1, 2 \}$, where for some $c>0$, we have $\Lambda_1=c \delta_0$, $\Lambda_2=0$, $M_{21},M_{12} \in \mathcal M(0,1]$, $R_{11}=\alpha'\delta_0$ and $R_{22}=\alpha\delta_0$ for some $\alpha',\alpha\geq 0$, and all other measures are equal to zero. The moment dual of the arising process can be interpreted as follows: the vertex 2 is a continent with a large population and the vertex 1 is an island with a smaller one, there is migration in both directions and selection at both locations, but random genetic drift plays a role only on the island. 

We are interested in sufficient conditions under which $Z_t^{(2)}$ tends to infinity almost surely, which is equivalent to almost sure fixation of the fitter type in the dual process. Such an assertion follows as soon as we can verify that $Z_t \to \infty$ almost surely as $t \to\infty$ for a process $(Z_t)_{t \geq 0}$ satisfying
\[ Z_t^{(2)} \geq Z_t, \qquad \forall t \geq 0 \numberthis\label{peripatricdomination} \]
realizationwise, given that $Z_0^{(2)}=Z_0$. In order to construct a process that dominates $(Z_t^{(2)})_{t \geq 0}$ from below in this sense, we can remove migration from vertex 1 to vertex 2 from the model. Then, the population on vertex 1 does not influence the one on vertex 2, and hence we can ignore the population on vertex 1 and consider migration from vertex 2 to 1 as death. This gives rise to the one-dimensional process $(Z_t)_{t \geq 0}$ on vertex set $\{ 2 \}$ with the following measures: $\Lambda_2=0$ for the coalescence, $R_{22}=\alpha \delta_0$ for the reproduction (as for $(Z_t^{(2)})_{t \geq 0}$), $M_{22}=0$ for the migration and $D_2=M_{21}$ for the death. It is easy to see $(Z_t)_{t \geq 0}$ satisfies \eqref{peripatricdomination} realizationwise.

For a general migration measure $M_{21} \in \mathcal M[0,1]$, proving that $Z_t \to \infty$ almost surely as $t \to\infty$ may be involved. However, such results are available for Dirac measures. Note that for $D_2=p \delta_p$, $p \in (0,1)$, according to \eqref{eq:dual-death}, the part of the generator of the dual corresponding to death reads as
\[ B_{D_2} f(x)= \int_0^1 [f(x+y(1-x))-f(x)] \frac{1}{y} D_v(\dd y) = [f(x+p(1-x))-f(x)], \]
where we wrote $x$ instead of $x_2$ everywhere for simplicity. Now note that if a Markov process with values in $[0,1]$ jumps from $x \in [0,1]$ to $x+p(1-x)$ at rate $p$, then one minus this process jumps from $1-x$ to $(1-p)(1-x)$ at the same rate. This together with \eqref{eq:independent-branching} yields that if $(N_t)_{t \geq 0}$ denotes the dual of $(Z_t)_{t \geq 0}$, then $(1-N_t)_{t \geq 0}$ has generator
\[ \mathcal L f(x)= \alpha x (1-x) f'(x) + f((1-p)x)-f(x). \]
This relates our setting to the one of \cite{HP}, where processes with generators of the form
\[ \mathcal L f (x) = \alpha (x) f'(x) + f((1-p)x)-f(x) \numberthis\label{pgenerator} \]
were studied, where the domain of the generator $\mathcal L$ consists of continuously differentiable functions $f \colon [0,\nu) \to \R$ for $\nu>0$ fixed, and $\alpha \colon [0,\nu) \to \R$ differentiable. It was showed in \cite[Theorem 1]{HP} that if we have $\sup_{x \in (0,\nu)} \frac{\alpha(x)}{x} < -\log (1-p)$, then the process tends to zero almost surely. Now, if we choose $\nu=1$ and $\alpha(x)=\alpha x(1-x)$, $\alpha>0$, then this condition is equivalent to 
\[ \alpha < -\log (1-p), \numberthis\label{alphacond} \]
which can always be guaranteed by a suitable choice of $\alpha>0$ as long as $p \in (0,1)$. Thus, if $\alpha$ and $p$ satisfy \eqref{alphacond}, then it follows by \cite[Theorem 1]{HP} that $N_t \to 1$ almost surely as $t \to \infty$. This implies almost sure fixation of the fitter type in the peripatric coalescent with selection and coordinated migration thanks to \eqref{peripatricdomination}.

This can be generalized to the case when $D_2=M_{21}$ is compactly supported within $(0,1]$, i.e., there exist $p^->0$ such that $\Lambda([0,p^-)) = 0$. In this case, we define $\bar c= \int_{[0,1]} \frac{D_2(\dd y)}{y}$; this number is finite by assumption. Then the process $(N_t)_{t \geq 0}$ is stochastically dominated from below by the moment dual of the process having death measure $D_2^- = \bar c p^- \delta_{p^-}$ and all the other measures equal to the ones corresponding to $(Z_t)_{t \geq 0}$. Hence, if $\alpha$ is such that \eqref{alphacond} holds with $p=p^-$, then $N_t \to 1$ almost surely as $t \to\infty$, which again implies almost sure fixation.
\end{example}

\section{Coordination and expectation}\label{sec-E}
In this section we show that for the process $(Z_t)_{t \geq 0}$ introduced in Definition~\ref{def:random_rate}, for $v \in V$, the expectation process $t \mapsto \E[Z_t^{(v)}]$ equals the unique solution of a linear differential equation depending only on the total mass of the underlying measures $M_{uw}$, $D_{w}$ and $R_{uw}$, $u,w \in V$. This is true under the assumption that there is no coalescence (i.e., $\Lambda_v=0$ for all $v \in V$), but we will also provide some extensions to the case of nonzero coalescence.

\begin{lemma}\label{lem:expectationequality}
Let the collections of measures $(D_v)_{v \in V}$, $(R_{vu})_{v,u \in V}$, $(M_{vu})_{v,u \in V}$ and $(\Lambda_v)_{v \in V}$ satisfy Definition~\ref{def:random_rate} with $\Lambda_v=0$ for all $v \in V$. Define $(f(t,v))_{t \in [0,\infty), v \in V}=(\E[Z_t^{(v)}])_{t\in [0,\infty), v \in V}$. Then $(f(t,v))_{t \in [0,\infty), v \in V}$ is the unique solution of

\begin{equation}\label{magic}
\frac{\mathrm d}{\mathrm d t} f(t,v) = \sum_{u \in V}( f(t,u) m_{uv}-f(t,v) m_{vu}) - f(t,v) d_{v} + \sum_{u \in V} f(t,u) r_{uv}, \qquad v \in V, t\in [0,\infty)
\end{equation}
with initial condition $f(0,v)=z_0^{(v)}=Z_0^{(v)} \in \R^d$, $v \in V$.
\end{lemma}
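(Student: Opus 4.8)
The plan is to apply the generator $A$ of \eqref{eq:generator} to the coordinate functions $g_v(z):=z_v$, to observe that the outcome is a \emph{linear} function of $z$ in which the detailed shape of the measures has disappeared (only their total masses survive), and then to convert this identity into the ODE \eqref{magic} via Dynkin's formula. The analytic heart of the argument is the elementary identity
\[ \sum_{i=1}^{z_v} \binom{z_v}{i}\, i\, y^i(1-y)^{z_v-i} = z_v y, \]
the mean of a $\mathrm{Binomial}(z_v,y)$ law, which exactly cancels the singular factor $1/y$ present in every term of \eqref{eq:generator}. This is precisely the analytic reason, already visible in Example~\ref{ex:bpre}, why coordination leaves the expectation invariant.

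First I would compute $A g_v$ block by block. A migration event from $v'$ to $u'$ changes $g_v$ by $-i$ if $v'=v$, by $+i$ if $u'=v$, and by $0$ otherwise; using the identity above, $A_{M_{v'u'}} g_v(z)$ equals $-z_v m_{vu'}$ when $v'=v$ and $z_{v'} m_{v'v}$ when $u'=v$, so the migration block contributes $\sum_{u\in V}(z_u m_{uv} - z_v m_{vu})$. A reproduction event from $v'$ adds offspring at $u'$, contributing $+i$ to $g_v$ exactly when $u'=v$; summing gives $\sum_{u\in V} z_u r_{uv}$. A death event at $v'$ contributes $-i$ when $v'=v$, giving $-z_v d_v$. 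Since $\Lambda_v=0$, the coalescence block vanishes identically. Hence
\[ A g_v(z) = \sum_{u\in V}\big(z_u m_{uv}-z_v m_{vu}\big) - z_v d_v + \sum_{u\in V} z_u r_{uv}, \]
which is linear in $z$ and depends on the measures only through the total masses $m_{uv}, d_v, r_{uv}$, exactly as claimed.

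The main obstacle is that $g_v$ is unbounded, hence not in the extended domain $\widetilde C_0$ furnished by Lemma~\ref{lem:markov}, so Dynkin's formula is not immediate. I would resolve this by localization. Let $T_N := \inf\{t \geq 0 : |Z_t| \geq N\}$; applying the martingale characterization of Lemma~\ref{lem:markov} to a bounded function agreeing with $g_v$ on $\{|z| \leq N\}$ and using optional stopping yields
\[ \E_z\big[g_v(Z_{t\wedge T_N})\big] = z_v + \E_z\Big[\int_0^{t\wedge T_N} A g_v(Z_s)\,\dd s\Big]. \]
I would then pass to the limit $N\to\infty$. By the conservativity in Lemma~\ref{lem:markov}, $T_N\to\infty$ almost surely, and the stochastic domination $|Z_t|\leq Y_t$ by the nondecreasing pure-birth process $(Y_t)_{t\geq 0}$ with $\E[Y_t]=\e^{rt}<\infty$ supplies the integrable majorant $\sup_{s\leq t}|Z_s|\leq Y_t$. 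Since $A g_v$ grows at most linearly, $|A g_v(Z_s)|\leq C(1+Y_t)$ for $s\leq t$, so dominated convergence on the left and dominated convergence together with Fubini on the right give $f(t,v) = z_v + \int_0^t \E_z[A g_v(Z_s)]\,\dd s$.

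Finally, because $A g_v$ is linear, $\E_z[A g_v(Z_s)]$ is the same linear combination of the finite first moments $f(s,u)=\E_z[Z_s^{(u)}]$, so differentiating in $t$ turns the integral identity into exactly \eqref{magic}, with initial condition $f(0,v)=z_0^{(v)}$. Uniqueness is then immediate: \eqref{magic} is a linear constant-coefficient ODE system on $\R^{|V|}$ whose right-hand side is globally Lipschitz, so the Picard--Lindel\"of theorem yields a unique solution.
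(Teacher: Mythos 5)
Your proof is correct and follows essentially the same route as the paper: apply the generator to the coordinate function $z\mapsto z_v$, use the binomial mean $\sum_{i} i\binom{z_v}{i}y^i(1-y)^{z_v-i}=z_v y$ to cancel the $1/y$ singularity so that only the total masses survive, then Dynkin's formula, Fubini, and linearity of the resulting ODE system for uniqueness. The only divergence is technical: where you localize with the stopping times $T_N$ and dominate by the Yule process, the paper instead asserts that $z\mapsto z_v$ lies in the extended domain by monotone convergence from the truncations $\min\{z_v,k\}$; both devices are standard and adequate here.
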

\begin{proof}
The ODE in \eqref{magic} is linear with continuous coefficients and thus has a unique solution.
We fix $v\in V$ for the proof. From the proof of Lemma \ref{lem:markov} we know that $\E[|Z_t|]<\infty$ for all $t>0$ and from Lemma \ref{lem:markov} that $f^k_v(z)=\min \{z_v,k \}$ is in the extended domain for all $k \in \N$ and $v \in V$. From monotone convergence we conclude that $f_v(z)=z_v$ is also in the extended domain, for all $v \in V$. Applying the form \eqref{eq:generator} of the generator for $f_v(z)=z_v$, Dynkin's formula together with the linearity of expectation implies
\[ \begin{aligned}
\E&\big[Z_t^{(v)}\big]-Z_0^{(v)} = \E \Big[ \int_0^t \Big( \sum_{u \in V} \Big[ \int_0^1 \,  \sum_{i=1}^{Z_s^{(u)}} \binom{Z^{(u)}_s}{i} iy^i(1-y)^{Z_s^{(u)}-i} \frac{1}{y} M_{uv}(\dd y) \\ & \quad  - \int_0^1 \sum_{i=1}^{Z_s^{(v)}} \binom{Z^{(v)}_s}{i}i y^i(1-y)^{Z_s^{(v)}-i} \frac{1}{y} M_{vu}(\dd y) \Big]  \\ & \quad - \int_0^1 \sum_{i=1}^{Z^{(v)}_s}  \binom{Z^{(u)}_s}{i} i y^i(1-y)^{Z_s^{(u)}-i} \frac{1}{y}D_{v}(\dd y) 
 + \sum_{u \in V} \int_0^1 \sum_{i=1}^{Z_s^{(u)}} \binom{Z_s^{(u)}}{i} i y^i (1-y)^{Z_s^{(u)}-i} \frac{1}{y} R_{uv}(\dd y) \Big) \dd s \Big] 
\\
& = \E \Big[ \int_0^t  \Big( \sum_{u \in V} \Big[ \int_0^1 \, \E\big[\mathrm{Bin}(Z_s^{(u)},y) | Z_s\big] \frac{1}{y}M_{uv}(\dd y)-\int_0^1 \, \E\big[\mathrm{Bin}(Z_s^{(v)},y) | Z_s\big] \frac{1}{y}M_{vu}(\dd y) \Big] \\ & \quad -  \int_0^1  \, \E\big[\mathrm{Bin}(Z^{(v)}_s,y) | Z_s\big] \frac{1}{y}D_{v}(\dd y)
+ \sum_{u \in V} \int_0^1 \, \E\big[\mathrm{Bin}(Z^{(u)}_s,y) | Z_s\big] \frac{1}{y}R_{uv}(\dd y)\Big) \dd s \Big],
\end{aligned} \numberthis\label{eq:expectations}
\]
where $\mathrm{Bin}(n,p)$ denotes a binomially distributed random variable with parameters $n \in \N$ and $p \in [0,1]$. 
Let us show that for example the term for fixed $u \in V$ corresponding to migration between $u$ to $v$ depends only on the total mass of $M_{uv}$ and $M_{vu}$:
\[ \int_0^1 \, \E\big[\mathrm{Bin}(Z_s^{(v)},y)|Z_s\big] \frac{1}{y}M_{uv}(\dd y)-\int_0^1 \, \E[\mathrm{Bin}(Z_s^{(u)},y)|Z_s] \frac{1}{y}M_{vu}(\dd y) = Z^{(u)}_s m_{uv}-Z^{(v)}_s m_{vu}. \] 
Analogously, we obtain for the death term
\[ -  \int_0^1  \, \E\big[\mathrm{Bin}(Z^{(v)}_s,y) | Z_s\big] \frac{1}{y}D_{v}(\dd y) = -Z_s^{(v)} d_{v}\]
and for the reproduction term for fixed $u \in V$ with offspring in $v \in V$
\[ \int_0^1 \, \E\big[\mathrm{Bin}(Z^{(u)}_s,y) | Z_s\big] \frac{1}{y}r_{uv}(\dd y) = Z_s^{(u)} r_{uv}. \]
Since our process $(Z_t)_{t \geq 0}$ is nonnegative and $\E[\int_0^t Z_s^{(u)} m_{uv} \dd s]<\infty$ for all $u \in V$, the Fubini--Tonelli theorem implies that we can interchange the outermost expectation (with respect to $Z_s^{(v)}$) with the integration from 0 to $t$, and similarly for the death and reproduction terms. Performing this and differentiating with respect to $t$, we obtain that $(f(t,w))_{t \geq 0, w \in V}=(\E[Z_t^{(w)}])_{t \geq 0, w \in V}$ is a solution to \eqref{magic}.
\end{proof}
Since the system of linear ODEs has an unique solution, the previous lemma implies that the expectation is invariant under coordination of migration, birth and death, as long as the coalescence measures are zero and the total masses of the other measures are unchanged.

The previous lemma works for coordinating events that affect single individuals. As calculating the expectation for the fully coordinated process is in general simpler, this provides a general machinery to calculate expectations, see the following example and (in the context of the PAM) Example~\ref{ex:intermediatePAM}.
\begin{example}
Consider a one-dimensional pure death process $(Z_t)_{t \geq 0}$ with $D=d\delta_0$ (where $D$ and $R$ denote the single death and reproduction measures, respectively). Our approach to calculate its expectation is to consider the fully coordinated process $(\bar Z_t)_{t \geq 0}$ with $D=d\delta_1$, where all the individuals die simultaneously at a random time $\mathcal T$ which is exponentially distributed with parameter $d$. Then for $t \geq 0$, $\E_n[Z_t]=\E_n[\bar Z_t]=n\P(\mathcal T>t)=ne^{-dt}$. 

The same principle can be used to calculate the expectation of a Yule process $(Z_t)_{t \geq 0}$ with branching parameter $r>0$, i.e., $R=r\delta_0$ and all other parameters equal to zero. As we saw in Example 1, in this case the fully coordinated process ($R=r\delta_1$) admits the representation  $\bar Z_t=n 2^{W_{rt}}$ where $(W_t)_{t \geq 0}$ is a standard Poisson process. This implies that $\E_n[\bar Z_t]=n\E[2^{W_{rt}}]$. Now, using that the probability generating function of a Poisson random variable with rate parameter $rt$ evaluated at $x$ is $\E[x^{W_{rt}}]=e^{rt(x-1)}$ we conclude that $\E_n[Z_t]=\E_n[\bar Z_t]=ne^{rt}$. 

If we now change the notation and consider a process $(Z_t)_{t \geq 0}$ with $D=d\delta_0$, $R=r\delta_0$ and all other parameters equal to zero, we can combine the previous examples to observe that the fully coordinated process $(\bar Z_t)_{t \geq 0}$ defined via $\bar Z_t=n 2^{W_{rt}}\mathds 1_{\{\mathcal  T>t\}}$ satisfies $\E_n[Z_t]=\E_n[\bar Z_t]=n\E[2^{W_{rt}}]\P(\mathcal  T>t)=ne^{(r-d)t}$. It is interesting to see that the fully coordinated process and the birth-death branching process have the same expectation at any deterministic time $t\geq 0,$ but very different path behaviour. The first one will be extinguished almost surely at time $\mathcal T$ regardless of the reproduction rate, while if $r-d>0$ the birth-death branching process tends to infinity with positive probability.
\end{example}

The case $\Lambda_v\neq 0$ does not allow such a clean representation. It is clear that no such result can hold in general, i.e., coordination has an effect on the expectation in the presence of pairwise interaction. To see this, think of the expectation of the block-counting process of a Kingman coalescent (no coordination). It is known that at $t>0$ started from infinity the expectation is finite (see \cite{BB} and the references therein), while the expectation for a star-shaped coalescent (full coordination, \cite{pitman}) started at infinity is always infinite. It is not hard to see that starting both processes with 3 blocks will already lead to processes with different expectations.   However, it is still possible to use the idea of Lemma \ref{lem:expectationequality} to some extent. We state a result for the Kingman case $\Lambda_v=\delta_0$. 

\begin{proposition}\label{lem:expectationequality2}
Let the collections of measures $(D_v)_{v \in V}$, $(R_{vu})_{v,u \in V}$ and $(M_{vu})_{v,u \in V}$ and $(\Lambda_v)_{v \in V}$ satisfy Definition~\ref{def:random_rate} with $\Lambda_v=c_v\delta_0$. Let $(f(t,v))_{t \geq 0, v \in V}$ be any solution of
\begin{eqnarray}\label{magic2}
\frac{\dd}{\dd t} f(t,v) &=& \sum_{u \in V}( f(t,u) m_{uv}-f(t,v) m_{vu}) - f(t,v) d_{v} + \sum_{u \in V} f(t,u) r_{uv}\nonumber\\&&-(f(t,v)^2-f(t,v))\frac{c_v}{2}, \qquad v \in V, t >0,
\end{eqnarray}
with $f(0,v)=z_0^{(v)} \in \R$, $v \in V$.
Then $\E[Z_t^{(v)}]\leq f(t,v)$.
\end{proposition}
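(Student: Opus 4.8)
The plan is to adapt the Dynkin-formula argument from the proof of Lemma~\ref{lem:expectationequality}, with the single new feature being the coalescence term, and then to convert the resulting (nonlinear) integro-differential relation into the differential \emph{inequality} \eqref{magic2} via a convexity/Jensen estimate. First I would check that $f_v(z)=z_v$ lies in the extended domain exactly as before: the migration, death and reproduction parts are handled verbatim as in Lemma~\ref{lem:expectationequality}, using monotone convergence on the truncations $\min\{z_v,k\}$ and the finiteness $\E[|Z_t|]<\infty$ coming from the domination argument in Lemma~\ref{lem:markov}. The only genuinely new computation is the contribution of $A_{\Lambda_v}$ to $\E[Z_t^{(v)}]$. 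Applying \eqref{eq:generator} with $\Lambda_v=c_v\delta_0$ and using \eqref{delta0Lambda}, each coalescence event removes $z_v-1$ from the count relative to the \lq\lq merge everyone\rq\rq\ bookkeeping, but for the Kingman case ($\delta_0$) only pairwise mergers occur, contributing to the drift of $\E[Z^{(v)}_t]$ the term $-c_v\binom{Z^{(v)}_s}{2}$, i.e.\ $-\tfrac{c_v}{2}(Z^{(v)}_s{}^2-Z^{(v)}_s)$.

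Next I would assemble, via Dynkin's formula and Fubini--Tonelli (justified exactly as in Lemma~\ref{lem:expectationequality}, since all the linear terms have integrable time-integrals and the coalescence term is nonpositive), the exact evolution equation for $g(t,v):=\E[Z^{(v)}_t]$:
\[
\frac{\dd}{\dd t} g(t,v)=\sum_{u\in V}\big(g(t,u)m_{uv}-g(t,v)m_{vu}\big)-g(t,v)d_v+\sum_{u\in V}g(t,u)r_{uv}-\tfrac{c_v}{2}\E\big[(Z^{(v)}_t)^2-Z^{(v)}_t\big].
\]
The key inequality is Jensen's: $\E[(Z^{(v)}_t)^2]\geq (\E[Z^{(v)}_t])^2=g(t,v)^2$, so that $\E[(Z^{(v)}_t)^2-Z^{(v)}_t]\geq g(t,v)^2-g(t,v)$, and hence, because the coalescence prefactor $-c_v/2$ is nonpositive,
\[
\frac{\dd}{\dd t}g(t,v)\leq \sum_{u\in V}\big(g(t,u)m_{uv}-g(t,v)m_{vu}\big)-g(t,v)d_v+\sum_{u\in V}g(t,u)r_{uv}-\tfrac{c_v}{2}\big(g(t,v)^2-g(t,v)\big).
\]
Thus $g=\E[Z^{(\cdot)}_t]$ is a \emph{subsolution} of \eqref{magic2}, with the same initial data $g(0,v)=z_0^{(v)}$ as $f$.

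The final step is a comparison argument: since $f$ solves \eqref{magic2} with equality and $g$ satisfies it with $\leq$, and both agree at $t=0$, I want to conclude $g(t,v)\leq f(t,v)$ for all $t$ and $v$. The main obstacle is that this is a \emph{system} with quadratic (hence only locally Lipschitz) nonlinearities and with coupling across the coordinates $v$ through the migration and reproduction terms, so the scalar comparison principle does not apply directly; one must use a vector comparison theorem for quasimonotone (cooperative) systems. Here the off-diagonal coupling coefficients $m_{uv},r_{uv}$ are nonnegative, which makes the right-hand side of \eqref{magic2} quasimonotone nondecreasing in the off-diagonal variables, so the Müller--Kamke comparison theorem for cooperative systems applies and yields $g(t,v)\leq f(t,v)$ on the common interval of existence. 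I would handle the quadratic blow-up issue either by a Gronwall-type a~priori bound (the linear part dominates and the quadratic term is a nonpositive drift, so $g$ stays bounded by the purely linear system and cannot explode) or by a standard localization-and-continuation argument, thereby extending the comparison to all $t\geq 0$.
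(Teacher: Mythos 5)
Your proposal is correct and follows essentially the same route as the paper: apply the Dynkin-formula computation of Lemma~\ref{lem:expectationequality} to get the evolution of $\E[Z_t^{(v)}]$ with the extra Kingman term $-c_v\E\big[\binom{Z_s^{(v)}}{2}\big]$, bound it via Jensen's inequality by $-\tfrac{c_v}{2}\big(\E[Z_s^{(v)}]^2-\E[Z_s^{(v)}]\big)$, and conclude by comparison with the solution of \eqref{magic2}. The only difference is that you spell out the final subsolution-vs-solution comparison step (quasimonotonicity of the cooperative system and non-explosion), which the paper leaves implicit; that is a welcome addition rather than a deviation.
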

\begin{proof}
The proof follows from the proof of Lemma \ref{lem:expectationequality} together with Jensen's inequality. Indeed,
$$
-\E \Big[ \binom{Z_s^{(v)}}{2} \Big]=-\frac{1}{2}\E \Big[( Z_s^{(v)})^2- Z_s^{(v)}\Big]\leq -\frac{1}{2}\E [ Z_s^{(v)}]^2+\frac{1}{2}\E [ Z_s^{(v)}],
$$
which deals with the additional term.
\end{proof}

 \subsection{Variations of the PAM branching process}\label{sec:PAM}
 The Parabolic Anderson Model is a classical mathematical object that has attracted a lot of attention in the recent decades. In its context, we work on the subgraph of $\mathbb Z^d$ spanned by the vertex set $V=[K]^d$, where $[K]=\{ 1,\ldots, K \}$ for $K \in \N$. Consider the Cauchy problem for the heat equation with random coefficients and localized initial datum: Let $\bar\xi^+=\{\xi_v^+\}_{v\in [K]^d}$ and $\bar\xi^-=\{\xi_v^-\}_{v\in [K]^d}$ be two families of independent and identically distributed random variables with values in $\R^+$, where we write $[K]=\{ 1,\ldots, K \}$ for $K \in \N$. For $v \in [K]^d$ let us define $\xi_v := \xi^+_v-\xi^-_v$, and let us put $\bar \xi=\{ \xi_v \}_{v \in [K]^d}$. Then the Cauchy problem is
 \begin{eqnarray}
\frac{ \dd}{\dd t} f(t,v)&=&\sum_{u \colon |u-v|=1} [f(t,u)-f(t,v)]+\xi_vf(t,v) \label{PAM}\\
 f(0,v)&=&\mathds 1_{\{v=\bar 0\}}.\nonumber
 \end{eqnarray}
It is well-known that conditionally on $(\bar\xi^+,\bar \xi^-)$ the branching process $(Z_t)_{t \geq 0}$ which goes from the state $\bar z$ to the state $\bar z-e_v+e_u$ at rate $z_v$ if $u$, $v$ are neighbouring vertices and to the state $\bar z+e_v$ at rate $z_v \xi^+_v$ and $\bar z-e_v$ at rate $z_v \xi^-_v$, where $\xi^+_v-\xi^-_v=\xi_v$,  has the property that under mild conditions on $\bar\xi=\{\xi_v\}_{v\in [K]^d}$,   $f(t,v)= \E[Z_t^{(v)}]$ is a solution to the PAM \cite{GM}. (Note that conditional on $\bar \xi$, this solution is unique according to Lemma~\ref{lem:expectationequality}.) For this reason $(Z_t)_{t \geq 0}$ is studied in \cite{OR1,OR2,OR3}. We note that the results of the present section remain valid if we replace $[K]^d$ with a discrete torus, i.e., if the notion of neighbouring vertices is taken with respect to periodic boundary conditions.
 
As the process $(Z_t)_{t \geq 0}$ is a branching process, one can use moment duality, for example, to estimate the probability that there is at least one individual (in the branching process) in a certain position, using an ODE. Indeed, imagine that the branching process starts with one individual in the island $\bar 0$ and we are interested in knowing if at a fixed time $t>0$ there is some individual in position $v$. Taking $\bar x= (1,1,...,1)-e_v(1-\varepsilon)$, $\bar z= e_v$, 
\begin{equation}\label{extinction}
\mathbb{P}_{e_v}(Z_t^{(v)}=0)= \lim_{\varepsilon\rightarrow 0} \E_{e_v}[\varepsilon^{Z_t^{(v)}}]= \lim_{\varepsilon\rightarrow 0}\E_{\bar x}[X_t^{(v)}] 
 \end{equation}
and 
\begin{equation}\label{occupancy}
\mathbb{P}_{e_u}(Z_t^{(v)}=0)=\lim_{\varepsilon\rightarrow 0}  \E_{e_u}[\varepsilon^{Z_t^{(v)}}]= \E_{(1,1,...,1)-e_v}[X_t^{(u)}].
 \end{equation}
This approach seems not to be explored yet in the PAM literature. Indeed, the behaviour of this branching process is a classical problem that has been solved to a great extent only recently \cite{OR1,OR2,OR3} using different techniques.  Note that in the case without death, $(X_t^{(v)})_{v\in V,t\geq 0}$ is the solution of a system of ordinary differential equations
 \begin{eqnarray}
\frac{ \dd}{\dd t}X_t^{(v)}&=&\sum_{u \colon |u-v|=1} [X_t^{(u)}-X_t^{(v)}]dt+\xi_v X_t^{(v)}(1-X_t^{(v)})dt \label{PAMdual}
 \end{eqnarray}
which is easy to solve numerically and seems plausible to study mathematically (see Figure \ref{imagen}).

\begin{figure}\label{imagen} \centering
\includegraphics[scale=.467]{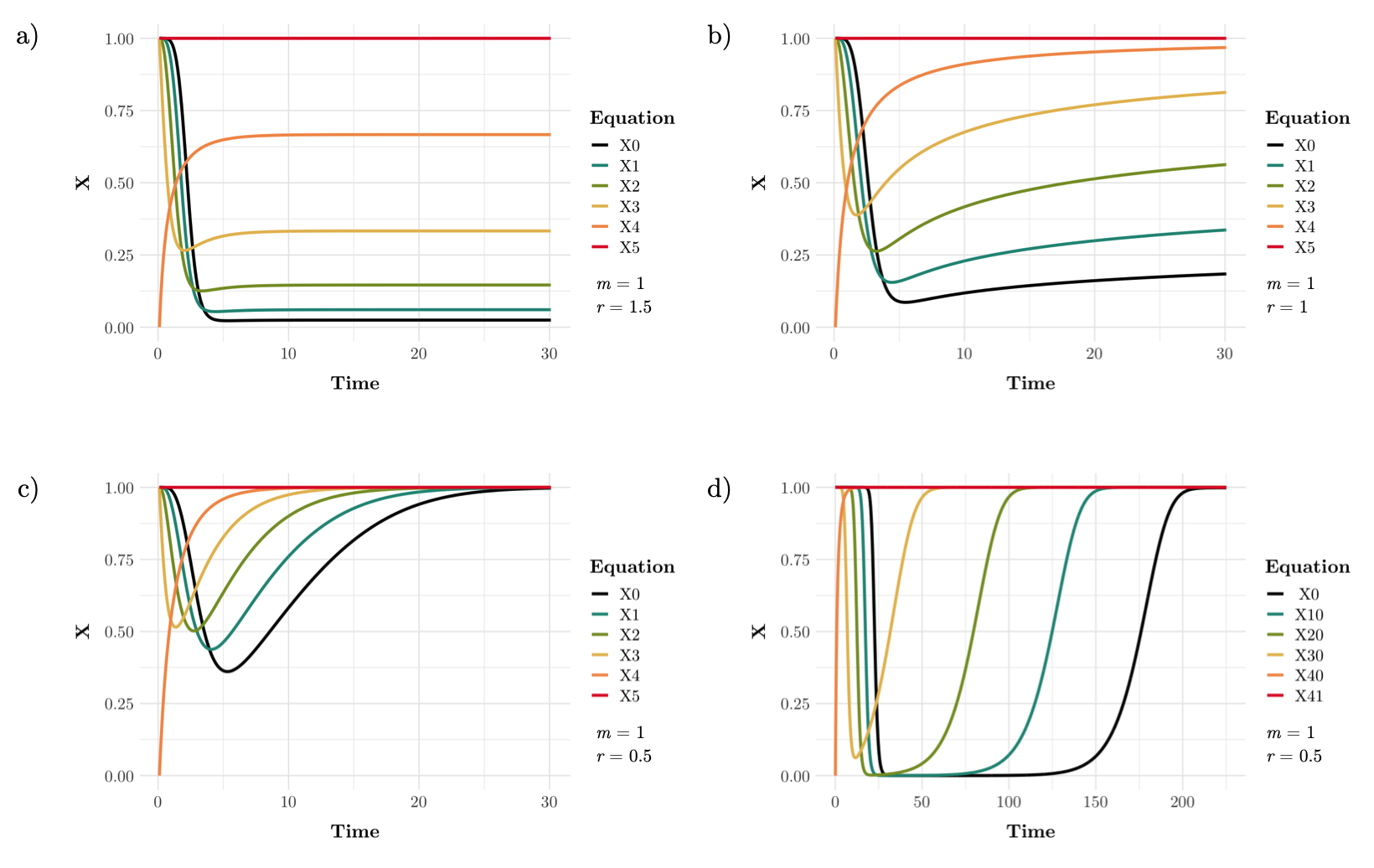}\caption{\small In these pictures we observe the process $X_t$ with $V=\N$, $R_{v,v}=r\delta_0$, $M_{v,v+1}=m\delta_0$ and all other parameters being zero. This is the dual of a branching random walk in which particles branch at rate 1 and migrate from state $v$ to $v+1$ at rate $m$.  In a), b) and c) the starting condition is $(1,1,...)-e_4$ and in d) it is $(1,1,...)-e_{40}$. As observed in Equation \eqref{occupancy}, the black line $(X_0)$ is the graph of the probability that there is no particle at position 4 (resp.\ 40) at time $t>0$, starting the branching random walk with one particle at position zero at time zero.}\end{figure}

Probably, the most important technique used in the study of the PAM is the following assertion.
\begin{proposition}[Feynman-Kac formula]\label{thm:feynmankac}
Let $(Y_t)_{t \geq 0}$ be a simple symmetric random walk in $[K]^d.$ Under the moment condition 
\[ \E \Big[ \big(\frac{\max \{ \xi_v, 2 \}}{\log(\max \{ \xi_v,2 \})}\big)^d \Big]<\infty, \qquad \forall v \in V, \numberthis\label{PAMmomentcond} \]
we have that
\[ f(t,v)=\E[\mathrm{e}^{\int_0^t \xi_{Y_s}\dd s}\mathds 1_{\{Y_t=v\}}]=\E[\mathrm{e}^{\int_0^t \xi_{Y_s}^+\dd s}\mathds 1_{\{Y_t=v\}} \mathds 1_{\{ t < \mathcal T \}}], \qquad t>0, v \in [K]^d \]
is a solution to the PAM, where $(M_t)_{t \geq 0}$ is a Poisson process that is independent of $(Z_t)_{t \geq 0}$, and
\[ \mathcal T = \inf \{ t \geq 0 \colon M_{\int_0^t \xi_{Y_s}^- \dd s} \geq 1 \}. \numberthis\label{Tdef} \] 
 \end{proposition}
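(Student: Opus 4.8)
The plan is to read the Feynman--Kac formula off the coordination invariance of the first moment (Lemma~\ref{lem:expectationequality}) instead of from parabolic PDE theory. For the PAM branching process the coalescence measures vanish, so, conditionally on $\bar\xi$, Lemma~\ref{lem:expectationequality} identifies $f(t,v)=\E[Z_t^{(v)}]$ as the \emph{unique} solution of the linear system \eqref{magic} with total masses $m_{vu}=\mathds 1_{\{|u-v|=1\}}$, $r_{vv}=\xi_v^+$ and $d_v=\xi_v^-$; since the net on-site rate is $r_{vv}-d_v=\xi_v$, this system is precisely the Cauchy problem \eqref{PAM}. Hence it suffices to show that the two expectations on the right-hand side both equal $\E[Z_t^{(v)}]$, and the whole proposition reduces to evaluating this first moment.

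First I would evaluate $\E[Z_t^{(v)}]$ using a conveniently coordinated representative of the class. By Lemma~\ref{lem:expectationequality} the expectation is unchanged if the independent reproduction and death measures $\xi_v^+\delta_0,\ \xi_v^-\delta_0$ are replaced by their fully coordinated versions $\xi_v^+\delta_1,\ \xi_v^-\delta_1$, while migration is kept independent so that an ancestral lineage performs a simple symmetric random walk $(Y_s)_{0\le s\le t}$ on $[K]^d$ started at $\bar 0$. Conditioning on this walk, the coordinated death events arrive along an inhomogeneous Poisson clock of rate $\xi_{Y_s}^-$, so the lineage is still present at time $t$ exactly on $\{t<\mathcal T\}$ with $\mathcal T$ as in \eqref{Tdef}, while the coordinated reproduction events arrive along an independent inhomogeneous clock of rate $\xi_{Y_s}^+$ and double the population at each arrival; the corresponding contribution to the first moment is therefore the multiplicative weight $\exp(\int_0^t \xi_{Y_s}^+\,\dd s)$, which is the time-inhomogeneous form of the identity $\E[2^{N^{(\lambda)}_s}]=\e^{\lambda s}$ already used in Example~\ref{ex:bpre}. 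Taking expectations over $Y$ and restricting to $\{Y_t=v\}$ gives
\[ \E[Z_t^{(v)}]=\E\big[\e^{\int_0^t \xi_{Y_s}^+\,\dd s}\,\mathds 1_{\{Y_t=v\}}\,\mathds 1_{\{t<\mathcal T\}}\big], \]
which is the second representation.

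The two representations are then matched by integrating out the killing. Conditionally on $(Y_s)_{s\le t}$, the time-changed process $s\mapsto M_{\int_0^s \xi_{Y_r}^-\,\dd r}$ is the Poisson process $M$ evaluated at the (now deterministic) time $\int_0^t \xi_{Y_s}^-\,\dd s$, so that $\P(t<\mathcal T\mid Y)=\P\big(M_{\int_0^t \xi_{Y_s}^-\,\dd s}=0\mid Y\big)=\e^{-\int_0^t \xi_{Y_s}^-\,\dd s}$. Inserting this, and using $\xi=\xi^+-\xi^-$ to merge the two exponentials, collapses the display to the first representation $\E[Z_t^{(v)}]=\E[\e^{\int_0^t \xi_{Y_s}\,\dd s}\mathds 1_{\{Y_t=v\}}]$. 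The initial datum is automatic, since at $t=0$ both sides equal $\P(Y_0=v)=\mathds 1_{\{v=\bar 0\}}$. As an independent check one can verify directly by Dynkin's formula --- applied to the random-walk generator together with the potential term $\xi_v$ --- that the right-hand side solves \eqref{PAM}, which together with the uniqueness from Lemma~\ref{lem:expectationequality} re-proves the identity.

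The hard part will be the rigorous justification of the single-lineage (many-to-one) reduction for the genuinely coordinated, multi-site dynamics: under full coordination a reproduction or death event acts on all particles at a site at once, so the factorization of $\E[Z_t^{(v)}]$ into a per-path growth weight times a per-path survival probability is not a statement about a single particle evolving in isolation. The clean way around this is to observe that the first moment obeys the \emph{closed} linear equation \eqref{magic}, which by Lemma~\ref{lem:expectationequality} does not feel the correlations that coordination introduces; alternatively one argues through the Kolmogorov equations for the mean occupation. A secondary technical point is finiteness: the moment condition \eqref{PAMmomentcond} is what makes the exponential functionals, and hence $\E[Z_t^{(v)}]$, integrable and the representation well posed, and it is precisely this assumption that becomes indispensable once the construction is transported to the unbounded graph $\mathbb Z^d$ in Section~\ref{sec-infinity}.
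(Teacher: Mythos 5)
Your overall architecture is the paper's: use Lemma~\ref{lem:expectationequality} to identify $\E[Z_t^{(v)}]$ with the unique solution of \eqref{magic} (which with $m_{vu}=\mathds 1_{\{|u-v|=1\}}$, $r_{vv}=\xi_v^+$, $d_v=\xi_v^-$ is exactly \eqref{PAM}), then evaluate that expectation on a conveniently coordinated representative via the Poisson generating function, and finally integrate out the killing using $\P(t<\mathcal T\mid Y)=\e^{-\int_0^t\xi^-_{Y_s}\dd s}$ to match the two displayed representations. That last step and the identification of \eqref{magic} with \eqref{PAM} are fine.

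There is, however, a concrete gap in your central computation: you coordinate reproduction and death but keep migration independent, and then condition ``on the walk'' as if the whole population followed a single trajectory $(Y_s)_{s\le t}$. For the process you chose there is no such single walk --- after the first reproduction event the two particles migrate independently, occupy different sites, and are then subject to \emph{different} site-local coordinated clocks, so the factorization of $\E[Z_t^{(v)}]$ into a path weight $\e^{\int_0^t\xi^+_{Y_s}\dd s}$ times a survival indicator is precisely the many-to-one statement you would need to prove, not something you may read off by conditioning. Your proposed patch (``the first moment obeys the closed linear equation \eqref{magic}'') only re-establishes that $\E[Z_t^{(v)}]$ solves the PAM; it does not show that the Feynman--Kac expression equals it, and your ``independent check'' via Dynkin's formula for the random-walk generator plus potential is just the classical analytic proof the paper is trying to replace. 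The fix is the paper's one remaining move: coordinate the migration as well, i.e.\ replace $M_{vu}$ by $m_{vu}\delta_1$. Then all particles occupy a single site at all times, that site performs the simple random walk $(Y_t)_{t\ge0}$, and the process is literally $Z'^{(v)}_t=2^{N_{\tau}}\mathds 1_{\{Y_t=v\}}\mathds 1_{\{t<\mathcal T\}}$ with $\tau=\int_0^t\xi^+_{Y_s}\dd s$, so that $\E\big[2^{N_\tau}\,\big|\,\sigma(\bar\xi^+,(Y_s)_{s\le t})\big]=\e^{\tau}$ is an exact identity rather than a heuristic; the rest of your argument then goes through verbatim.
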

The original proof is analytic and can be found in \cite{GM}.  Our construction provides a straightforward proof of this formula using the \lq lonely walker representation\rq~(see Remark 2.7 of \cite{KPAM}).
\begin{proof}
Since in the PAM there is no coalescence, Lemma~\ref{lem:expectationequality} provides a straightforward way to compute $\E[Z_t^{(v)}]$, $v \in V$. Let us first consider the case without death. Then, thanks to the lemma and the uniqueness of the solution of the PAM under \eqref{PAMmomentcond} (cf.~\cite[Theorem 1.2]{KPAM}), for $t \geq 0$, $Z_t^{(v)}$ has the same expectation as $Z'^{(v)}_t$ where the \lq fully coordinated PAM\rq~process $(Z'_t)_{t \geq 0}$ is such that $\Lambda_v=D_v=0$, further, $R_{uv}$ and $M_{vu}$ are replaced by their total masses times $\delta_1$. In the process $(Z'_t)_{t \geq 0}$, all individuals move together and reproduce simultaneously according to a Poisson process $(N_t)_{t \geq 0}$ time-changed by $(\xi^+_v)_{v \in V}$ evaluated along the random walk path $(Y_t)_{t \geq 0}$.
To be more precise, let us define $\tau=\int_0^t \xi^+_{Y_s} d s$. Then
$(Z'_t)_{t \geq 0}=(Z'^{(v)}_{t})_{t \geq 0,v \in V}$ is defined as
\[ Z'^{(v)}_t = 2^{N_{\tau}} \mathds 1_{\{ Y_t = v \}}. \]
Using the probability generating function of a Poisson random variable, one computes
\[ \E\big[Z_t^{(v)}\big]=\E\big[Z'_t \mathds 1_{\{Y_t = v \}}\big] = \E\big[ 2^{N_\tau} \mathds 1_{\{Y_t = v \}} \big] = \E \big[ \E \big[ 2^{N_\tau} \big| \sigma\big(\bar \xi^+, (Y_s)_{0 < s \leq t}\big) \big] \mathds 1_{\{ Y_t = v \}} \big] = \E \big[ \mathrm e^\tau \mathds 1_{\{ Y_t = v \}} \big], \]
which finishes the proof. 

In case there is also death in the model, in the fully coordinated process all individuals simultaneously die 
after the first arrival time of a Poisson process $(M_t)_{t \geq 0}$ independent of $(N_t)_{t \geq 0}$ time-changed by $(\xi^-_v)_{v \in V}$. To be more precise, for $t>0$,
\[ \{ Z'_t = 0 \} = \{ t \geq \mathcal T \}, \]
where $\mathcal  T$ is defined according to \eqref{Tdef}.
Thus, we have
\[ 
\begin{aligned}
  \E\big[Z_t^{(v)}\big] & = \E \big[ \mathrm e^\tau \mathds 1_{\{ Y_t=v \}} \mathds 1_{\{ t< \mathcal T \}} \big]=\E \big[ \mathrm e^{\int_0^t \xi^+_{Y_s} d s} \mathds 1_{\{ Y_t=v \}} \mathds 1_{\{ t< \mathcal T \}} \big]=\E \big[ \mathrm e^{\int_0^t (\xi^+_{Y_s}- \xi^-_{Y_s}) d s}  \mathds 1_{\{ Y_t=v \}}\big] 
  \\ & =\E \big[ \mathrm e^{\int_0^t \xi_{Y_s} d s}  \mathds 1_{\{ Y_t=v \}}\big]. 
\end{aligned}\]
\end{proof}
\begin{example}\label{ex:intermediatePAM}
Since the PAM has a unique solution under the condition \eqref{PAMmomentcond}, there is an uncountable family of coordinated processes $(Z''_t)_{t \geq 0}$ such that $\E[Z''^{(v)}_t]$ equals $\E[Z_t^{(v)}]=\E[Z'^{(v)}_t]$ from the proof of Proposition~\ref{thm:feynmankac}. For example, this is the case for $(Z''_t)_{t \geq 0}$ where the birth and the death rates are the same as for the branching process $(Z_t)_{t \geq 0}$, further, $\Lambda_v=0$ and $M_{uv}=m_{uv}\delta_{\frac{1}{2}}$. In the process $(Z''_t)_{t \geq 0}$, for any $(u,v) \in V \times V$ with $u \neq v$, migration events from $u$ to $v$ happen according to a homogeneous Poisson process, independently of all the other pairs of vertices, and at a migration event each individual situated at $u$ migrates to $v$ independently with probability $1/2$.
\end{example}

\section{Coordination and variance}\label{sec-Var}
In this section, we further analyse the processes that turn out to have the same expectation thanks to Lemma~\ref{lem:expectationequality}. In the case of spatial branching processes with migration, i.e., in the case where there is no coalescence but reproduction, death and migration are possibly present in the model, we compute the variance of the processes. We show that given the total masses of the reproduction, death and migration measures, the variance is maximal in the completely coordinated case and minimal in the independent case.

We say that the collection of measures 
\[ \big\{ (D_v)_{v \in V}, (R_{vu})_{v,u \in V}, (M_{vu})_{v,u \in V}, (\Lambda_v)_{v\in V} \big\} \]
is of type 
\[ \big\{ (d_v)_{v \in V}, (r_{vu})_{v,u \in V}, (m_{vu})_{v,u \in V}, (c_v)_{v\in V} \big\} \]
if $D_v[0,1]=d_v$, $R_{vu}[0,1]=r_{vu}$, $M_{vu}[0,1]=m_{vu}$ and $\Lambda_v[0,1]=c_v$. In case the structured branching coalescing process $(Z_t)_{t \geq 0}$ (defined according to Definition~\ref{def:random_rate}) has parameters of this type, we write $(Z_t)_{t \geq 0} \in \mathcal{K}((d_v)_{v}, (r_{vu})_{u,v}, (m_{vu})_{u,v}, (c_v)_{v} )$.

\begin{lemma}\label{lem:varianceequality}
Let  $d_v, r_{vu},m_{vu}\geq 0$ and $c_v=0$ for all $u,v \in V$. Then, \[ \sup_{(Z_t)_{t \geq 0} \in \mathcal{K}((d_v)_{v}, (r_{vu})_{u,v}, (m_{vu})_{u,v}, (0)_v )}\mathrm{Var}[Z^{(v)}_s]=\mathrm{Var}[\bar Z^{(v)}_s] \]  for all $v \in V$, where $\bar Z^{(v)}_s$ is such that for all $u,w \in V$, $M_{u,w}=m_{u,w}\delta_1$, $R_{uw}=r_{uw}\delta_1$ and $D_{u}=d_u\delta_1$, and  \[ \inf_{(Z_t)_{t \geq 0}\in \mathcal{K}((d_v)_{v}, (r_{vu})_{u,v}, (m_{vu})_{u,v}, (0)_v )}\mathrm{Var}[Z^{(v)}_s]=\mathrm{Var}[\underline Z^{(v)}_s] \]  for all $v \in V$, where $\underline Z^{(v)}_s$ is such that for all $u,w \in V$, $M_{u,w}=m_{u,w}\delta_0$, $R_{uw}=r_{uw}\delta_0$ and $D_{u}=d_u\delta_0$. Here, $(0)_v$ denotes the collection of $|V|$ instances of the zero measure.
\end{lemma}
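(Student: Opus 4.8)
The plan is to reduce the variance comparison to a comparison of the second-moment matrix and to exploit Lemma~\ref{lem:expectationequality}: every process in $\mathcal K((d_v)_v,(r_{vu})_{u,v},(m_{vu})_{u,v},(0)_v)$ shares the same mean vector $f(t)=(\E[Z_t^{(v)}])_{v\in V}$, because it depends only on the total masses. Hence $\mathrm{Var}[Z_s^{(v)}]=\E[(Z_s^{(v)})^2]-f_v(s)^2$ with $f_v(s)$ fixed, and it suffices to compare $S_{ab}(t):=\E[Z_t^{(a)}Z_t^{(b)}]$. First I would check, exactly as in the proof of Lemma~\ref{lem:expectationequality} (truncating by $\min\{z_a,k\}\min\{z_b,k\}$, passing to the limit by monotone convergence, and using that the dominating one-dimensional process of the proof of Lemma~\ref{lem:markov} has finite second moments), that $z\mapsto z_az_b$ lies in the extended domain, so that Dynkin's formula applies. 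Using $\E[\mathrm{Bin}(n,y)]=ny$ and $\E[\mathrm{Bin}(n,y)^2]=ny+n(n-1)y^2$, a direct computation shows that $A(z_az_b)$ is a quadratic polynomial in $z$ whose coefficients depend on the defining measures only through the total masses and the first moments
\[ \gamma^M_{vu}:=\int_0^1 y\,M_{vu}(\dd y),\qquad \gamma^R_{vu}:=\int_0^1 y\,R_{vu}(\dd y),\qquad \gamma^D_{v}:=\int_0^1 y\,D_{v}(\dd y). \numberthis\label{gammadef} \]
Taking expectations yields a closed linear inhomogeneous system of ODEs for $(S_{ab})_{a,b\in V}$ with continuous coefficients, hence with a unique solution.

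Next I would read off the structure of this system. Writing $L$ for the mean-drift matrix of Lemma~\ref{lem:expectationequality} (so $\dot f=Lf$), the bookkeeping organizes the system into the matrix Lyapunov form
\[ \dot S(t)=L\,S(t)+S(t)L^{\top}+\Phi_\gamma[S(t)]+c_\gamma(t), \numberthis\label{Sode} \]
where $c_\gamma(t)$ is an inhomogeneity built from the fixed means $f(t)$, and $\Phi_\gamma=\sum_p\gamma_p\Phi_p$ is a non-negative combination of fixed linear maps, the index $p$ ranging over the elementary events (each migration pair $v\to u$, each reproduction pair $v\to u$, and each death at $v$, with weight $\gamma_p$ the corresponding first moment from \eqref{gammadef}). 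The crucial point is that each $\Phi_p$ has the completely positive form $\Phi_p[S]=S_{v(p)v(p)}\,w_pw_p^{\top}=(w_pe_{v(p)}^{\top})\,S\,(w_pe_{v(p)}^{\top})^{\top}$, where $v(p)$ is the source vertex of the event and $w_p$ is the displacement of one participating batch unit: $w_p=e_u-e_v$ for migration $v\to u$, $w_p=e_u$ for reproduction into $u$, and $w_p=e_v$ for death at $v$. In particular each $\Phi_p$ maps the positive semidefinite cone into itself. Finally, for a measure of total mass $m$ one has $\int_0^1 y\,\mu(\dd y)\in[0,m]$, the value $0$ being attained precisely by $m\delta_0$ and $m$ by $m\delta_1$; thus over $\mathcal K$ the parameter $\gamma$ ranges over a box, whose componentwise minimum (all $\gamma_p=0$) is realised by the fully independent process $\underline Z$ and whose componentwise maximum (all $\gamma_p$ equal to the corresponding total mass) by the fully coordinated process $\bar Z$.

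It then remains to prove that $S(t)$ is monotone in $\gamma$ for the Loewner order. Fix $\gamma\le\gamma'$ componentwise and let $S,S'$ solve \eqref{Sode} from the common initial value $S(0)=S'(0)=Z_0Z_0^{\top}$. Since \eqref{Sode} is linear in each $\gamma_p$, the difference $D:=S'-S$ satisfies $D(0)=0$ and
\[ \dot D(t)=L\,D(t)+D(t)L^{\top}+\Phi_{\gamma'}[D(t)]+G(t),\qquad G(t)=\sum_p(\gamma'_p-\gamma_p)\big(S_{v(p)v(p)}(t)-f_{v(p)}(t)\big)\,w_pw_p^{\top}. \numberthis\label{Dode} \]
Because $S_{v(p)v(p)}(t)-f_{v(p)}(t)=\E[Z_t^{(v(p))}(Z_t^{(v(p))}-1)]\ge0$ (as $Z_t$ is $\N_0$-valued) and $\gamma'_p\ge\gamma_p$, the source satisfies $G(t)\succeq0$ for all $t$. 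The generator $\mathcal M_{\gamma'}[\cdot]:=L(\cdot)+(\cdot)L^{\top}+\Phi_{\gamma'}[\cdot]$ is the sum of a Lyapunov part, whose flow $X\mapsto e^{tL}Xe^{tL^{\top}}$ preserves the positive semidefinite cone, and the completely positive map $\Phi_{\gamma'}$, whose exponential $e^{t\Phi_{\gamma'}}=\sum_{k\ge0}\frac{t^k}{k!}\Phi_{\gamma'}^k$ is again completely positive; by the Trotter product formula $e^{t\mathcal M_{\gamma'}}$ therefore preserves positive semidefiniteness. Applying the variation-of-constants formula to \eqref{Dode} gives $D(t)=\int_0^t e^{(t-s)\mathcal M_{\gamma'}}[G(s)]\,\dd s\succeq0$, so $S'(t)\succeq S(t)$ and in particular $S'_{vv}(t)\ge S_{vv}(t)$ for every $v$. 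Taking $\gamma$ componentwise minimal ($\underline Z$) and maximal ($\bar Z$) identifies these as the simultaneous minimiser and maximiser of $\mathrm{Var}[Z_s^{(v)}]=S_{vv}(s)-f_v(s)^2$ at every vertex, which is the claim.

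The delicate step is this final comparison. Because migration into a vertex couples $\dot S_{vv}$ to the off-diagonal $S_{vw}$, a naive entrywise comparison does not close, and one really needs the Loewner order together with the observation that coordination injects variance in the rank-one positive-semidefinite form $w_pw_p^{\top}$. Recognising \eqref{Sode} as a Lyapunov-plus-completely-positive evolution, and hence cone-preserving, is the crux; the sign $\E[Z(Z-1)]\ge0$ then does the rest.
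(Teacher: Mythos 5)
Your proof is correct, and it takes a genuinely different route from the paper's. The paper likewise starts from Dynkin's formula applied to $z\mapsto z_v^2$, computes the binomial moments, and then argues term by term that each integrand in the resulting identity for $\tfrac{\dd}{\dd t}\E[(Z_t^{(v)})^2]$ is pointwise nondecreasing in $y$ (the $y$-dependence sits in a factor $y\,\E[Z^{(w)}(Z^{(w)}-1)]\ge 0$), so that each term is maximized by a multiple of $\delta_1$ and minimized by a multiple of $\delta_0$; it then concludes directly, treating the equation for the diagonal second moments as if it closed on itself. As you correctly observe, it does not: migration into $v$ couples $\tfrac{\dd}{\dd t}\E[(Z_t^{(v)})^2]$ to the cross moment $\E[Z_t^{(u)}Z_t^{(v)}]$ (the paper's displayed computation in fact writes pure squares where cross moments should appear), and the coordination contribution $\gamma_p\,\E[Z^{(v_p)}(Z^{(v_p)}-1)]\,w_pw_p^{\top}$ has negative off-diagonal entries, so an entrywise (Kamke--M\"uller) comparison of the coupled system does not go through. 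Your reformulation of the full second-moment matrix as a Lyapunov evolution perturbed by the completely positive map $S\mapsto\sum_p\gamma_p S_{v_pv_p}w_pw_p^{\top}$, followed by the Loewner-order comparison via cone preservation of the flow and variation of constants, is exactly the right way to make the paper's pointwise monotonicity in $y$ bite: coordination injects second moment in the positive semidefinite rank-one form $w_pw_p^{\top}$, and positive semidefiniteness of the difference of second-moment matrices is the invariant that survives the coupling. The price is a heavier setup (matrix ODE, Trotter/Lie product formula, a separate justification that $z\mapsto z_az_b$ is in the extended domain with finite second moments); what it buys is a complete argument where the paper's is only a sketch, plus the stronger by-product that the entire matrix $\E[Z_tZ_t^{\top}]$ is Loewner-monotone in the first moments $\int_0^1 y\,\mu(\dd y)$ of the defining measures, not merely its diagonal.
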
 
\begin{proof}
Applying the form \eqref{eq:generator} of the generator for $f_v(z)=z_v^2$, we obtain, using Dynkin's formula,
\begin{align*}
    \mathbb{E}\Big[{Z_t^{(v)}}^2\Big]-{Z_0^{(v)}}^2& = \E \Big[ \int_0^t \Big(\sum_{u \in V}  \int_0^1\sum_{i=1}^{Z_s^{(v)}} (-2 Z_s^{(v)}i + i^2) \binom{Z_s^{(v)}}{i} y^i (1-y)^{Z^{(v)}_s-i} \frac{1}{y} M_{vu}(\dd y)
   \\ & \qquad + \sum_{u \in V}  \int_0^1  \sum_{i=1}^{Z_s^{(u)}} (2 Z_s^{(u)}i + i^2) \binom{Z_s^{(u)}}{i} y^i (1-y)^{Z^{(u)}_s-i} \frac{1}{y} M_{uv}(\dd y)\\
    & \qquad +    \int_0^1 \sum_{i=1}^{Z_v^{(s)}}(-2 Z_s^{(v)} i+i^2) \binom{Z_s^{(v)}}{i} y^i (1-y)^{Z_s^{(v)}-i} \frac{1}{y} D_v(\dd y) \\
    & \qquad + \sum_{u \in V}  \int_0^1 \sum_{i=1}^{Z_s^{(u)}} (2 Z_s^{(u)} i+i^2)\binom{Z_s^{(u)}}{i} y^i (1-y)^{Z_s^{(u)}-i} \frac{1}{y} R_{uv}(\dd y)
   \Big) \dd s \Big]. \numberthis\label{eq:lastline}
\end{align*}
Thus, recalling that for a binomial random variable with parameters $n,p$ one has $\E[X^2]=np(1-p+np)$ and using \eqref{eq:expectations} together with the Fubini-Tonelli theorem, we can interchange the outermost expectation with the integration from 0 to $t$ on the right-hand side of \eqref{eq:lastline} by the same arguments as in the proof of Lemma \ref{lem:expectationequality} and write the equation in differential form as follows 
\[ \numberthis\label{eq:Dynkinsquare} 
\begin{aligned}
  \frac{\dd}{\dd t}  \mathbb{E}& \Big[ {Z_t^{(v)}}^2 \Big]=  \E \big[ \sum_{u \in V}  \int_0^1 Z_s^{(u)} (1-y+Z_s^{(u)} y-2Z_s^{(u)}) \big]  M_{vu}(\dd y)
   \\ & \qquad +  \E \big[\sum_{u \in V}  \int_0^1  Z_s^{(v)} (1-y+Z_s^{(v)} y+2 Z_s^{(v)}) \big] M_{uv}(\dd y)
 \\ & \qquad +  \E \big[\int_0^1  Z_s^{(v)} (1-y+Z_s^{(v)} y-2 Z_s^{(v)}) \big] D_v(\dd y) \\
    & \qquad +  \E \big[\sum_{u \in V} \int_0^1 Z_s^{(u)}(1-y+Z_s^{(u)}y+2 Z_s^{(u)})\big] R_{uv}(\dd y).
\end{aligned}
\]
Now, $\mathrm{Var}[Z_t^{(v)}] = \E[{Z_t^{(v)}}^2]-\E[Z_t^{(v)}]^2 $, and  Lemma~\ref{lem:expectationequality} implies that $\E[Z_t^{(v)}]^2$ is constant given the total masses of all migration, death and reproduction measures. Hence, in order to maximize (minimize) $\mathrm{Var}[Z_t^{(v)}]$ given these total masses, it suffices to maximize (minimize) the right-hand side of \eqref{eq:Dynkinsquare}. Note that for all $v \in V$, $Z_s^{(v)}$ takes nonnegative integer values, and given that it is zero, $Z_s^{(v)}(1-y+Z_s^{(u)} y \pm 2Z_s^{(u)})=0$. Further, for $Z_s^{(v)} \geq 1$, $Z_s^{(v)}y \geq y$. It follows that given the total masses, any term on the right-hand side of \eqref{eq:Dynkinsquare} is maximal for the corresponding measure being a constant multiple of $\delta_1$ and minimal for the measure being a constant multiple of $\delta_0$. Hence,
we conclude the lemma.
\end{proof} 

The previous result allows us to bound the variance of all the processes whose expectation solves the PAM.
\begin{corollary}
Assume that $(Z_t)_{t \geq 0}$ is a coordinated branching process such that $\E[Z_t^{(v)}]$ is a solution of equation \eqref{PAM}. Let $(Y_t)_{t \geq 0}$ be a simple symmetric random walk in $[K]^d.$ Then, recalling $\xi_v=\xi_v^+-\xi_v^-$, for $v\in V$,
\begin{align*}
{\rm Var}\big[Z_t^{(v)}\big]\leq &\E\Big[\exp\Big(\int_0^t \xi^+_{Y_s}\dd s\Big)\Big(\exp \big(2\int_0^t \xi^+_{Y_s}\dd s\big)-1\Big)\mathds 1_{\{Y_t=v\}} \mathds 1_{\{ t < \mathcal T \}} \Big]\\
=&\E\Big[\exp\Big(\int_0^t \xi_{Y_s}\dd s\Big)\Big(\exp\big(2\int_0^t \xi^+_{Y_s}\dd s\big)-1\Big)\mathds 1_{\{Y_t=v\}} \Big].
\end{align*}
Here, $(M_t)_{t \geq 0}$ is a Poisson process independent of $(Z_t)_{t \geq 0}$, and $\mathcal T=\inf \{ t \geq 0 \colon M_{\int_0^t \xi^{-}_{Y_s} \dd s} \geq 1 \}$. 
\end{corollary}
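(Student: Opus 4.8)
The plan is to combine the fully-coordinated upper bound from Lemma~\ref{lem:varianceequality} with the Feynman--Kac representation from Proposition~\ref{thm:feynmankac}. By Lemma~\ref{lem:varianceequality}, among all coordinated processes of the same type (same total masses, and $c_v=0$ since the PAM branching process has no coalescence), the variance is maximized by the fully coordinated process $\bar Z^{(v)}_t$ with $M_{uw}=m_{uw}\delta_1$, $R_{uw}=r_{uw}\delta_1$ and $D_u=d_u\delta_1$. Hence it suffices to compute $\mathrm{Var}[\bar Z_t^{(v)}]$ exactly and to observe that $\mathrm{Var}[Z_t^{(v)}]\leq \mathrm{Var}[\bar Z_t^{(v)}]$.

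First I would invoke the explicit description of the fully coordinated process used in the proof of Proposition~\ref{thm:feynmankac}: conditionally on the random walk path $(Y_s)_{0\le s\le t}$ and on $\bar\xi$, the process is $\bar Z_t^{(v)}=2^{N_\tau}\mathds 1_{\{Y_t=v\}}\mathds 1_{\{t<\mathcal T\}}$, where $\tau=\int_0^t\xi^+_{Y_s}\,\dd s$, $(N_t)_{t\ge 0}$ is a rate-one Poisson process driving the simultaneous reproduction, and $\mathcal T$ is the simultaneous-death time defined in~\eqref{Tdef}. Since $\mathrm{Var}[\bar Z_t^{(v)}]\le\E[(\bar Z_t^{(v)})^2]$, and because $\mathds 1_{\{Y_t=v\}}^2=\mathds 1_{\{Y_t=v\}}$ and $\mathds 1_{\{t<\mathcal T\}}^2=\mathds 1_{\{t<\mathcal T\}}$, I would bound the variance by the second moment and compute $\E[(2^{N_\tau})^2]=\E[4^{N_\tau}]$ using the probability generating function of a Poisson random variable. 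Conditioning on $\sigma(\bar\xi^+,(Y_s)_{0<s\le t})$ gives $\E[4^{N_\tau}\mid\cdot]=\mathrm e^{3\tau}=\exp(3\int_0^t\xi^+_{Y_s}\,\dd s)$, exactly as the probability generating function argument in the proof of Proposition~\ref{thm:feynmankac} produced $\mathrm e^{\tau}$ from $\E[2^{N_\tau}\mid\cdot]$.

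The stated bound, however, is the \emph{variance} of the fully coordinated process rather than its bare second moment, so I would instead compute $\mathrm{Var}[\bar Z_t^{(v)}]$ directly. Writing $\bar Z_t^{(v)}=W\,\mathds 1_{\{Y_t=v\}}\mathds 1_{\{t<\mathcal T\}}$ with $W=2^{N_\tau}$, the first moment is $\E[\bar Z_t^{(v)}]=\E[\mathrm e^{\tau}\mathds 1_{\{Y_t=v\}}\mathds 1_{\{t<\mathcal T\}}]$ from Proposition~\ref{thm:feynmankac}, and the second moment is $\E[\mathrm e^{3\tau}\mathds 1_{\{Y_t=v\}}\mathds 1_{\{t<\mathcal T\}}]$ by the computation above. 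The claimed upper bound groups these as $\E[\mathrm e^{\tau}(\mathrm e^{2\tau}-1)\mathds 1_{\{Y_t=v\}}\mathds 1_{\{t<\mathcal T\}}]=\E[\mathrm e^{3\tau}\mathds 1_{\{Y_t=v\}}\mathds 1_{\{t<\mathcal T\}}]-\E[\mathrm e^{\tau}\mathds 1_{\{Y_t=v\}}\mathds 1_{\{t<\mathcal T\}}]$, i.e.\ the bare second moment minus the first moment. Since $\mathrm{Var}[\bar Z_t^{(v)}]=\E[(\bar Z_t^{(v)})^2]-\E[\bar Z_t^{(v)}]^2\le\E[(\bar Z_t^{(v)})^2]-\E[\bar Z_t^{(v)}]$ (using that $\E[\bar Z_t^{(v)}]\ge\E[\bar Z_t^{(v)}]^2$ fails in general, but here one uses $\E[\bar Z_t^{(v)}]^2\ge \E[\bar Z_t^{(v)}]$ only when the mean exceeds one), I would verify the precise inequality $\E[\bar Z_t^{(v)}]\le\E[\bar Z_t^{(v)}]^2$ is not what is needed; rather the displayed quantity equals the second moment minus the first moment, which dominates the variance because $\E[\bar Z_t^{(v)}]^2\ge\E[\bar Z_t^{(v)}]$ whenever the mean is at least one, and otherwise one checks the bound directly. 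The final equality in the statement is merely the substitution $\mathrm e^{\tau}\mathds 1_{\{t<\mathcal T\}}\rightsquigarrow\mathrm e^{\int_0^t\xi_{Y_s}\dd s}$ effected by taking expectations over the independent death clock $(M_t)$, exactly as in the last display of the proof of Proposition~\ref{thm:feynmankac}.

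The main obstacle is bookkeeping of the three moments rather than any deep idea: one must correctly identify $\E[(\bar Z_t^{(v)})^2]=\E[\mathrm e^{3\tau}\mathds 1_{\{Y_t=v\}}\mathds 1_{\{t<\mathcal T\}}]$ via $\E[4^{N_\tau}\mid\cdot]=\mathrm e^{3\tau}$, reconcile it with the grouping $\mathrm e^{\tau}(\mathrm e^{2\tau}-1)$ in the statement, and confirm that dropping the $-\E[\bar Z_t^{(v)}]^2$ term (replacing it by $-\E[\bar Z_t^{(v)}]$) yields a genuine upper bound in the regime of interest. The passage from the fully coordinated variance to the bound on $\mathrm{Var}[Z_t^{(v)}]$ for the \emph{given} process is then immediate from Lemma~\ref{lem:varianceequality}, since that lemma identifies the $\delta_1$-coordinated process as the variance maximizer over the class $\mathcal K$ with fixed total masses and zero coalescence.
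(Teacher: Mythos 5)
Your route is the same as the paper's: reduce to the fully coordinated process via Lemma~\ref{lem:varianceequality}, represent it as $\bar Z_t^{(v)}=2^{N_\tau}\mathds 1_{\{Y_t=v\}}\mathds 1_{\{t<\mathcal T\}}$ with $\tau=\int_0^t\xi^+_{Y_s}\,\dd s$, and compute the second moment $\E[(\bar Z_t^{(v)})^2]=\E[4^{N_\tau}\mathds 1_{\{Y_t=v\}}\mathds 1_{\{t<\mathcal T\}}]=\E[\mathrm e^{3\tau}\mathds 1_{\{Y_t=v\}}\mathds 1_{\{t<\mathcal T\}}]$ from the Poisson generating function. Up to that point your argument coincides with the paper's.

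The gap is in the last step, and you half-acknowledge it yourself. You correctly read the displayed right-hand side as $\E[(\bar Z_t^{(v)})^2]-\E[\bar Z_t^{(v)}]$, i.e.\ second moment minus \emph{first} moment, so that dominating $\mathrm{Var}[\bar Z_t^{(v)}]=\E[(\bar Z_t^{(v)})^2]-\E[\bar Z_t^{(v)}]^2$ by it requires $\E[\bar Z_t^{(v)}]^2\ge\E[\bar Z_t^{(v)}]$, i.e.\ $\E[\bar Z_t^{(v)}]\ge 1$. Your phrase ``otherwise one checks the bound directly'' is not a check: for $0<\E[\bar Z_t^{(v)}]<1$ the inequality can genuinely fail (take $\xi^+\equiv\xi^-\equiv 0$, so $Z_t^{(v)}=\bar Z_t^{(v)}=\mathds 1_{\{Y_t=v\}}$; the right-hand side is $0$ while $\mathrm{Var}[Z_t^{(v)}]=\P(Y_t=v)(1-\P(Y_t=v))>0$, and small positive potentials give the same conclusion by continuity). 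So as written your proof does not close, and no condition $\E[\bar Z_t^{(v)}]\ge1$ is available in general. For what it is worth, the paper's own proof crosses the same bridge silently, by asserting $\E[\bar Z_t^{(v)}]^2=\E[\mathrm e^{2\int_0^t\xi_{Y_s}\dd s}\mathds 1_{\{Y_t=v\}}\mathds 1_{\{t<\mathcal T\}}]$ --- identifying the square of an expectation with the expectation of the squared integrand, which Cauchy--Schwarz only gives as an inequality in the unhelpful direction --- so the obstruction you hit is a real soft spot of the statement, not a defect of your bookkeeping. What your computation does support cleanly is either $\mathrm{Var}[Z_t^{(v)}]\le\mathrm{Var}[\bar Z_t^{(v)}]=\E[\mathrm e^{3\tau}\mathds 1_{\{Y_t=v\}}\mathds 1_{\{t<\mathcal T\}}]-\big(\E[\mathrm e^{\tau}\mathds 1_{\{Y_t=v\}}\mathds 1_{\{t<\mathcal T\}}]\big)^2$, or the factorial-moment comparison $\E[Z_t^{(v)}(Z_t^{(v)}-1)]\le\E[\mathrm e^{\tau}(\mathrm e^{2\tau}-1)\mathds 1_{\{Y_t=v\}}\mathds 1_{\{t<\mathcal T\}}]$, which follows since the means coincide and the second moments are ordered by Lemma~\ref{lem:varianceequality}; either of these should replace the final step.
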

\begin{proof}
The right-hand side of the equation to be proven is the variance of $\bar Z_t^{(v)}$ in the notation of Lemma \ref{lem:varianceequality}, and thus the statement follows directly from this lemma. To calculate the variance we compute the second moment as follows:
\begin{align*}
\E\big[(Z_t^{(v)})^2\big]
= &\E\big[ 4^{N_{\int_0^t \xi_{Y_s}\dd s}}\mathds 1_{\{Y_t=v\}} \mathds 1_{\{ t < \mathcal T \}}  \big]=\E \big[ \mathrm{e}^{\ln(4)N_{\int_0^t \xi_{Y_s}\dd s}}\mathds 1_{\{Y_t=v\}} \mathds 1_{\{ t < \mathcal T \}}  \big] \\
= &\E\big[\mathrm{e}^{3\int_0^t \xi_{Y_s}\dd s}\mathds 1_{\{Y_t=v\}} \mathds 1_{\{ t < \mathcal T \}} \big] .
\end{align*}
where in the last equality we used the formula for moment generating function of a Poisson random variable. As shown in Proposition \ref{thm:feynmankac}, $\E[\bar Z_t^{(v)}]^2=\E[\mathrm{e}^{2\int_0^t \xi_{Y_s}\dd s}\mathds 1_{\{Y_t=v\}} \mathds 1_{\{ t < \mathcal T \}}]$, which together with the definition of $\mathcal T$ implies
$$
{\rm Var}\big[\bar Z_t^{(v)}\big]=\E \big[\mathrm{e}^{2\int_0^t \xi_{Y_s}\dd s}(\mathrm{e}^{\int_0^t \xi_{Y_s}\dd s} -1) \mathds 1_{\{Y_t=v\}} \mathds 1_{ \{ t < \mathcal T \} } \big]=\E\big[\mathrm{e}^{\int_0^t \xi_{Y_s}\dd s}(\mathrm{e}^{2\int_0^t \xi^+_{Y_s}\dd s}-1)\mathds 1_{\{Y_t=v\}} \big].
$$
\end{proof}

\section{Extensions to infinite graphs}\label{sec-infinity}
The main results of the present paper tell about the case when $G=(V,E)$ is a finite graph (where we recall that this graph was defined in Section~\ref{sec-modeldef}). Let us now discuss under what conditions these statements can be extended to an infinite graph in general. 

Let $G=(V,E)$ be an infinite, connected, locally finite graph. Choose a collection 
\[
\mathfrak M=\{ R_{uv},D_w, M_{uv}, \Lambda_w  \colon  w \in V, (u,v) \in E \} 
\] 
of elements of $\mathcal M[0,1]$ interpreted similarly to Definition~\ref{def:random_rate} for the case of a finite graph, and a collection
\[
\mathfrak P=\{ N^{R_{uv}}, N^{D_w}, N^{M_{uv}}, N^{\Lambda_w} \colon w \in V, (u,v) \in E \}
\]
of independent Poisson point processes, defined analogously to the case of a finite graph (cf.~page~\pageref{PPP's}), involving the measures contained in $\mathfrak M$.

Let us fix $v_0 \in V$. For $v,w \in V$ let $d(v,w)$ denote the graph distance of $v$ and $w$, i.e., the length of the shortest path of edges connecting $v$ and $w$ in the graph. Then for $N \in \N_0$ we define
\[ V^N=\{ v \in V \colon d(v,v_0) \leq N \} \]
as the set of vertices situated at graph distance at most $N$ from $v_0$. We also put $V_{-1}=E_{-1}=\emptyset$. We denote by $G^N=(V^N,E^N)$ the subgraph of $G$ spanned by $V^N$. Then we let $(Z_{N,t})_{t \geq 0}=(Z^{(v)}_{N,t})_{t \geq 0,v \in V}$ to be the process defined according to the Poisson point process representation for finite graphs involving the measures contained in the set
\[
\mathfrak M^N=\{ R_{uv},D_v, M_{uv}, \Lambda_v  \colon  w \in V^N, (u,v) \in E^N \}
\]
and the corresponding Poisson point processes included in the set
\[
\mathfrak P=\{ N^{R_{uv}}, N^{D_w}, N^{M_{uv}}, N^{\Lambda_w} \colon w \in V^N, (u,v) \in E^N \}.
\]
Now, for $N\in \N_0$ we let
\[ \tau_N = \inf \{ t \geq 0 \colon \exists v \in V^N \setminus V^{N-1} \colon Z_{N,t}^{(v)} >0 \}. \]
The crucial assumption in order to define a limiting process on the infinite graph is the following nonexplosion condition:
\[ \lim_{N \to \infty} \tau_{N} = \infty, \numberthis\label{nastycondition} \]
almost surely given the initial condition $Z_0^{(v)}=\mathds 1_{\{ v=v_0\}}$, $v \in V$. 
Indeed, then for $t \geq 0$, the random variable
\[ N_t = \inf \{ N \in \N_0 \colon t < \tau_N \}. \]
is almost surely finite. Hence, if we define
\[ Z_t = \lim_{N \to \infty} Z_{N,t \wedge \tau_N}, \]
then $Z_t=Z_{N_t,t}$, almost surely, 
and for all $t \geq 0$, $Z_{N_t,t}$ is almost surely well-defined according to the Poissonian construction for finite graphs.

Condition  \eqref{nastycondition} is difficult to check. Despite it should be true in many interesting cases, it is not hard to come up with examples in which even if all the measures have a bounded mass, condition  \eqref{nastycondition} is not satisfied. It remains an open question to find easy to verify sufficient conditions to extend the results presented in this paper to infinite graphs.

Lemma \ref{lem:expectationequality} can be extended to the infinite case under assumption \eqref{nastycondition}.
\begin{corollary}\label{cor:Emonconv}
Assume that for every $N$, $(Z_t^N)_{t \geq 0}$ as constructed above fulfills the condition of Lemma \ref{lem:expectationequality} and the assumption \eqref{nastycondition} is true. Then, for all $t \geq 0$ 
$$
\E[Z_t]=\lim_{N\rightarrow \infty} \E[Z_{t \wedge \tau_N}^N].
$$
\end{corollary}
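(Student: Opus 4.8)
The plan is to reduce the statement to an interchange of limit and expectation for an almost surely convergent sequence, and then to control a boundary contribution by comparison with a death-free process.

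First I would record the consequence of the construction. Writing $N_t=\inf\{N\colon t<\tau_N\}$, assumption \eqref{nastycondition} guarantees $N_t<\infty$ almost surely, and the consistency of the Poissonian construction on the nested graphs gives $Z^N_s=Z^M_s$ for all $M\ge N$ and all $s<\tau_N$: as long as no particle has reached the outer shell $V^N\setminus V^{N-1}$, the dynamics on $V^{N-1}$ and the transitions into the shell are driven by the same atoms of the processes in $\mathfrak P$. Consequently, on $\{t<\tau_N\}$ we have $Z^{N,(v)}_{t\wedge\tau_N}=Z^{(v)}_t$ and on $\{\tau_N\le t\}$ we have $Z^{N,(v)}_{t\wedge\tau_N}=Z^{(v)}_{\tau_N}$; in particular, for every $N\ge N_t$ the sequence is already equal to $Z_t$, so $Z^{N,(v)}_{t\wedge\tau_N}\to Z^{(v)}_t$ almost surely for each $v\in V$.

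Next I would split, for fixed $v$,
\[ \E\big[Z^{N,(v)}_{t\wedge\tau_N}\big]=\E\big[Z^{(v)}_t\,\mathds 1_{\{t<\tau_N\}}\big]+\E\big[Z^{(v)}_{\tau_N}\,\mathds 1_{\{\tau_N\le t\}}\big]. \]
Since $Z^{(v)}_t\ge 0$ and $\mathds 1_{\{t<\tau_N\}}\uparrow 1$ almost surely (again by \eqref{nastycondition}), the monotone convergence theorem yields $\E[Z^{(v)}_t\mathds 1_{\{t<\tau_N\}}]\uparrow\E[Z^{(v)}_t]$; this is the monotone-convergence core of the statement. Everything therefore reduces to showing that the boundary term $\E[Z^{(v)}_{\tau_N}\mathds 1_{\{\tau_N\le t\}}]$ tends to $0$.

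The boundary term is the main obstacle, and I would handle it by dominating the whole family $\{Z^{N,(v)}_{t\wedge\tau_N}\}_N$ by one integrable variable. Since the hypothesis of Lemma \ref{lem:expectationequality} holds (no coalescence), removing all death measures and coupling through the same Poisson point processes produces on each $G^N$ a process $\bar Z^N$ with $|Z^N_s|\le|\bar Z^N_s|$, where $s\mapsto|\bar Z^N_s|$ is non-decreasing (reproduction only increases, migration preserves total mass). Applying Lemma \ref{lem:expectationequality} to $\bar Z^N$ and summing \eqref{magic} over $v\in V^N$, the migration terms cancel and the death terms are absent, so $\frac{\dd}{\dd s}\E[|\bar Z^N_s|]\le\rho\,\E[|\bar Z^N_s|]$ with $\rho=\sup_{u\in V}\sum_{w\in V}r_{uw}$, giving the uniform bound $\E[|\bar Z^N_t|]\le \e^{\rho t}$. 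Monotonicity of $|\bar Z^N_t|$ in $N$ (no mass is ever lost, and a larger graph allows at least as much reproduction) lets me define $|\bar Z_t|=\lim_N|\bar Z^N_t|$ with $\E[|\bar Z_t|]\le \e^{\rho t}<\infty$, an integrable dominating variable. On $\{\tau_N\le t\}$ one has $Z^{(v)}_{\tau_N}\le|Z^N_{\tau_N}|\le|\bar Z^N_t|\le|\bar Z_t|$ while $\mathds 1_{\{\tau_N\le t\}}\to 0$ almost surely, so dominated convergence forces the boundary term to $0$ and finishes the proof. The delicate point is precisely the finiteness $\E[|\bar Z_t|]<\infty$ (equivalently $\rho<\infty$): I would either assume a uniform bound on the per-vertex reproduction rate or extract it from the non-explosion encoded in \eqref{nastycondition}, and this is where the genuine work lies.
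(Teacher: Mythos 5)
Your route is genuinely different from the paper's, and it is the final step that does not close. The paper's proof is a one-liner: it observes that for fixed $t$ the stopped processes $Z^N_{t\wedge\tau_N}$ form a sequence that is nondecreasing in $N$ and converges to $Z_t$ (by the consistency of the Poissonian construction and \eqref{nastycondition}), and then applies monotone convergence directly to $\E[Z^N_{t\wedge\tau_N}]$ — no boundary term is ever isolated and no dominating variable is needed. You instead split $\E[Z^{N,(v)}_{t\wedge\tau_N}]$ into $\E[Z^{(v)}_t\mathds 1_{\{t<\tau_N\}}]$ and the boundary contribution $\E[Z^{(v)}_{\tau_N}\mathds 1_{\{\tau_N\le t\}}]$. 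The first half of your argument is correct and is the same monotone-convergence mechanism as the paper's, and your decomposition is arguably the more careful one when death measures are present (then the coordinatewise monotonicity of $N\mapsto Z^N_{t\wedge\tau_N}$ is itself delicate, since the process can decrease between $\tau_N$ and $\tau_{N+1}\wedge t$).

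The genuine gap is the one you flag yourself: killing the boundary term requires the integrability $\E[|\bar Z_t|]<\infty$ of your dominating variable, which in your argument rests on $\rho=\sup_{u\in V}\sum_{w\in V}r_{uw}<\infty$. This is \emph{not} among the hypotheses of the corollary (the measures in $\mathfrak M$ are only assumed finite individually), and it does not follow from \eqref{nastycondition}: non-explosion of the spatial spread $\tau_N\to\infty$ constrains how fast mass reaches new shells, not how fast it reproduces in place, so one can have $\tau_N\to\infty$ almost surely while $r_{vv}$ grows along the graph and $\E[|\bar Z_t|]=\infty$. Without that uniform bound your dominated-convergence step has no integrable majorant, and the proof is incomplete as written. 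To repair it within the stated hypotheses you would need either to prove the paper's monotonicity claim for $N\mapsto Z^N_{t\wedge\tau_N}$ (which lets monotone convergence absorb the boundary term automatically), or to replace your global bound $\e^{\rho t}$ by a domination argument that does not require a uniform reproduction rate.
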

\begin{proof}
After observing that for every $t>0$, $Z_{t \wedge \tau_N}^N$ is an increasing function of $N$, the proof follows from monotone convergence.
\end{proof}

Next, we recall three classical processes that are strongly related to each other, two of them belonging to our class of processes: one independent one (a branching random walk) and one fully coordinated one (the binary contact path process), and the third one being the contact process. These processes are usually studied on infinite graphs such as $\mathbb Z^d$ or uniform trees (see~\cite{Liggett} for details). 
\begin{example}[Contact process, binary contact path process and branching random walk]
Let $D,R>0$, let $G=(V,E)$ be a (possibly infinite) graph and let $(\bar Z_t)_{t \geq 0}=(Z_t^{(v)})_{v\in V, t\geq 0}$ be the $\N_0^{|V|}$-valued Markov process with transitions
\begin{equation}
z \mapsto 
\begin{cases}
z- z_ve_v, & \textrm{ at rate } D,\, v\in V,\\
z+ z_ve_u, & \textrm{ at rate }R \mathds 1_{(v,u)\in E},\, u,v\in V.
\end{cases}
\end{equation}
It is clear that the process $(\bar Z_t)_{t \geq 0}$ satisfies Condition \eqref{nastycondition}. It is called the \emph{binary contact path process}, and it was first studied in \cite{Griffeath}. Now consider $\bar C_t=(C_t^{(v)})_{v\in V}=(\mathds 1_{Z_t^{(v)}>0})_{v\in V}$, $t\geq 0$ and observe that $(C_t)_{t \geq 0}$ is the contact process on the graph $G$ with parameters $D$ and $R$ (cf.~\cite[Section 2]{BG}). Let further $(\bar N_t)_{t \geq 0}=(N_t^{(v)})_{v\in V,t\geq 0}$ be the $\N_0^{|V|}$-valued Markov process with transitions
\begin{equation}
z \mapsto 
\begin{cases}
z- e_v, & \textrm{ at rate }  D z_v,\, v\in V,\\
z+ e_u, & \textrm{ at rate } R z_v \mathds 1_{(v,u)\in E} ,\, u,v\in V.
\end{cases}
\end{equation}
The process $(\bar N_t)_{t \geq 0}=(N_t^{(v)})_{v\in V,t\geq 0}$ is the \emph{branching random walk} associated to the contact process. We observe that by Lemma~\ref{lem:expectationequality},
\[ \E_{\bar z}[ Z_t^{(v)}]=\E_{\bar z}[N_t^{(v)}], \qquad \forall v \in V, t \geq 0, \bar z \in \N_0^{|V|}, \numberthis\label{contactexpectation}\] 
and thus, $(\bar N_t)_{t \geq 0}$ also satisfies Condition \eqref{nastycondition}. 

It is easy to see that if $C_0^{(v)} \leq N_0^{(v)}$ for all $v \in V$, then $C_t^{(v)} \leq N_t^{(v)}$ for all $t \geq 0$ and $v \in V$; in this sense, the contact process can be seen as a branching random walk where particles at the same site (vertex) coalesce. These assertions can be found in \cite[page 32]{Liggett}. A well-known consequence \cite[page 43]{Liggett} of this comparison is that if all degrees of the graph $G$ are bounded by $d \in \N$, then $\lim_{t \to \infty} |C_t|=0$ holds almost surely whenever the branching random walk is subcritical, i.e., $dR-D < 0$. Further, in the critical case $dR=D$, since particles of this branching random walk can actually die with positive probability, the branching random walk and hence also the contact process dies out. 

\end{example}

We now present two applications of the results of the present paper that extend to infinite graphs thanks to Corollary~\ref{cor:Emonconv}. To start with, the proof that we provided for the Feynman-Kac formula (Proposition~\ref{thm:feynmankac}) remains true for infinite graphs satisfying \eqref{nastycondition}. Using classical results on the PAM, we can provide a more explicit sufficient condition for \eqref{nastycondition} as follows.
\begin{corollary}
Proposition~\ref{thm:feynmankac} remains true for $V=\mathbb Z^d$, $d \in \N$ (instead of $V=[K]^d$ for $K \in \N$), in case the moment condition~\eqref{PAMmomentcond} holds.
\end{corollary}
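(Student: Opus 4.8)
The plan is to reduce the statement to verifying the nonexplosion condition \eqref{nastycondition} for $V=\mathbb Z^d$ under the moment condition \eqref{PAMmomentcond}. Indeed, by the discussion preceding the corollary, the proof of Proposition~\ref{thm:feynmankac} carries over to any infinite, locally finite graph satisfying \eqref{nastycondition}, with Corollary~\ref{cor:Emonconv} taking the role of Lemma~\ref{lem:expectationequality}. Since $\mathbb Z^d$ is locally finite with all degrees equal to $2d$, it therefore suffices to show that, almost surely, $\tau_N\to\infty$ as $N\to\infty$ for the PAM branching process started from a single particle at $v_0=\bar 0$.

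To this end I would exploit that in the PAM the only mechanism moving particles in space is migration, which occurs at the bounded per-particle rate $2d$, whereas \eqref{nastycondition} concerns particles reaching the far-away shell $V^N\setminus V^{N-1}$. Fix $t>0$ and let $K_N(t)$ be the number of migration events landing on this shell up to time $t$. Since reproduction in the PAM is local and the shell $V^N\setminus V^{N-1}$ is empty before $\tau_N$, the first particle to appear there must arrive by migration from $V^{N-1}\setminus V^{N-2}$, so that $\{\tau_N\le t\}=\{K_N(t)\ge 1\}$. The compensator of $K_N(t)$ has rate at most $2d$ times the number of particles currently on $V^{N-1}\setminus V^{N-2}$, whence
\[ \E[K_N(t)]\le 2d\int_0^t \sum_{v\colon d(v_0,v)=N-1}\E\big[Z_s^{(v)}\big]\,\dd s. \]
By the expectation invariance of Lemma~\ref{lem:expectationequality} (equivalently, the lonely-walker/Feynman--Kac representation), $\sum_{d(v_0,v)=N-1}\E[Z_s^{(v)}]=\E\big[\mathrm e^{\int_0^s\xi_{Y_u}\dd u}\mathds 1_{\{d(v_0,Y_s)=N-1\}}\big]$, where $(Y_s)_{s\ge 0}$ is the simple symmetric random walk on $\mathbb Z^d$.

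The key estimate then follows by Cauchy--Schwarz applied to the joint law of $\bar\xi$ and $Y$, splitting the right-hand side into the exponential-moment factor $\E[\mathrm e^{2\int_0^s\xi_{Y_u}\dd u}]^{1/2}$ and the travel probability $\P(d(v_0,Y_s)=N-1)^{1/2}$. The first factor is finite and bounded on $[0,t]$ under \eqref{PAMmomentcond}: this is exactly the classical finiteness of exponential moments of the Feynman--Kac functional from the PAM literature \cite{GM,KPAM}, and \eqref{PAMmomentcond} is invariant under replacing $\xi$ by $2\xi$. The second factor is super-exponentially small, since reaching graph distance $N-1$ requires at least $N-1$ jumps and the number of jumps of $Y$ by time $s$ is Poisson with mean $2ds\le 2dt$, so that $\P(d(v_0,Y_s)=N-1)\le\P(\mathrm{Poi}(2dt)\ge N-1)$, which is summable in $N$ together with its square root. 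Consequently $\sum_N\E[K_N(t)]<\infty$, so by Borel--Cantelli $K_N(t)=0$ for all large $N$ almost surely; thus $\tau_N>t$ eventually, and letting $t\to\infty$ along the integers gives $\tau_N\to\infty$ almost surely. With \eqref{nastycondition} in hand, Corollary~\ref{cor:Emonconv} and the infinite-graph version of the proof of Proposition~\ref{thm:feynmankac} conclude.

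The main obstacle is the interplay in the above estimate between the random exponential weight $\mathrm e^{\int_0^s\xi_{Y_u}\dd u}$, which can be large because the potential $\xi$ is unbounded, and the smallness of the random-walk travel probability. The content of \eqref{PAMmomentcond} is precisely that the lonely-walker exponential moments stay finite, so that the super-exponential decay of the travel probability wins; verifying this balance (and checking that the harmless constant $2$ in the Cauchy--Schwarz step does not spoil the moment condition) is where the classical PAM theory is genuinely used, and is the only point beyond routine first-moment bookkeeping.
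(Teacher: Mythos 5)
Your overall strategy is genuinely different from the paper's. The paper does not attempt a direct verification of \eqref{nastycondition} by controlling how fast particles spread; instead it argues softly: by \cite[Theorem 1.2]{KPAM} the lattice PAM has a unique solution under \eqref{PAMmomentcond}, Corollary~\ref{cor:Emonconv} identifies $\E[Z_t^{(v)}]$ with this solution via monotone convergence of the truncated processes $Z^N_{t\wedge\tau_N}$, and a second monotone-convergence step identifies the solution with the Feynman--Kac functional through the finite-graph Proposition~\ref{thm:feynmankac}. Your plan --- a shell-by-shell first-moment bound on the number of migration events reaching distance $N$ by time $t$, followed by Borel--Cantelli --- is a more quantitative and in principle more informative route, and the skeleton (reproduction is on-site, so the first particle on shell $N$ must immigrate from shell $N-1$; bound the travel probability by a Poisson tail) is sound, modulo the bookkeeping that the estimate must be phrased for the truncated processes $Z_{N,\cdot}$ and the walk on $G^N$, since the infinite-volume process is not yet defined at that stage.

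There is, however, a genuine gap in the key estimate. You apply Cauchy--Schwarz \emph{under the joint law of $\bar\xi$ and $Y$} and assert that the resulting factor $\E\big[\mathrm e^{2\int_0^s\xi_{Y_u}\,\dd u}\big]$ is finite under \eqref{PAMmomentcond}, "exactly the classical finiteness of exponential moments of the Feynman--Kac functional". The classical result is a \emph{quenched} statement: $\E_Y\big[\mathrm e^{\int_0^t\xi_{Y_s}\,\dd s}\big]<\infty$ almost surely in $\xi$. The annealed quantity is generally infinite under \eqref{PAMmomentcond}: on the event that the walk stays at the origin up to time $s$ one gets the lower bound $\P(Y \text{ does not jump before } s)\,\E_\xi[\mathrm e^{2s\xi_0}]$, and \eqref{PAMmomentcond} permits potentials with no finite exponential moments at all (e.g.\ Pareto tails, a standard PAM example). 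So the factorization as written produces $\infty\cdot 0$ and proves nothing. The fix is to run the entire argument conditionally on $\bar\xi$: apply Cauchy--Schwarz only over the law of $Y$, so the first factor becomes the quenched total mass of the PAM with potential $2\xi$ (a.s.\ finite since \eqref{PAMmomentcond} is stable under $\xi\mapsto 2\xi$), deduce $\sum_N\E[K_N(t)\mid\bar\xi]<\infty$ a.s., apply Borel--Cantelli under the conditional law, and then integrate over $\bar\xi$. With that repair your argument goes through, but as stated the crucial moment bound appeals to a theorem that does not say what you need.
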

\begin{proof}
As already mentioned, according to \cite[Theorem 1.2]{KPAM}, \eqref{PAM} has a unique solution in $\mathbb Z^d$ given that \eqref{PAMmomentcond} holds. Using Corollary~\ref{cor:Emonconv}, we conclude that $f(t,v)= \E[Z_t^{(v)}]$ equals this solution. Finally, thanks to Proposition~\ref{thm:feynmankac} and monotone convergence, this solution must be equal to $f(t,v)=\E[\mathrm{e}^{\int_0^t \xi_{Y_s}^+\dd s}\mathds 1_{\{Y_t=v\}} \mathds 1_{\{ t < \mathcal T \}}]$. This implies the corollary.
\end{proof}

Finally, as an additional application of the invariance of expectation on infinite graphs, we consider a branching random walk on a $d$-uniform rooted tree, and we provide a probabilistic interpretation of its expectation process in terms of the underlying ``fully coordinated'' process.
\begin{remark}\label{ex:tree}
Let the graph $G=(V,E)$ be a $d$-uniform rooted tree for some $d \in \N$. That is, there is a distinguished vertex $o$ called the root, which is the only vertex in the 0th generation of the vertex set $V$, vertex generations are pairwise disjoint, and for $n \in \N_0$, each vertex in generation $n$ is connected by an edge to precisely $d$ vertices in generation $n+1$, so that each vertex in generation $n+1$ has precisely one neighbour from generation $n$. For each $n \in \N_0$, let us fix an arbitrary indexing $v_{(n,1)},\ldots, v_{(n,d^n)}$ of the vertices of generation $n$, in particular, $v_{(0,1)}=o$. Then we fix $r,\mu>0$ and define a branching random walk, i.e., a particle system $(Z_t)_{t \geq 0}$ on $G$ according to Definition~\ref{def:random_rate} with the following rates: $\Lambda_v = D_v=0$ for all $v \in V$, $R_{vv}=r \delta_0$ for all $v\in V$ and $R_{vu} = 0$ for all $u,v \in V, u \neq v$, further, $M_{vu}=\frac{1}{d} \mu \delta_0$ in case $(v,u) \in E$ and $u$ belongs to one generation higher than $v$ and $M_{vu}=0$ otherwise. In words, particles create an offspring at rate $r$ and jump to a uniformly chosen neighbouring vertex in the next generation at rate $\mu$, independently of all the other particles. 

Let $(\bar Z_t)_{t \geq 0}$ be the associated fully coordinated process, i.e., the process that exhibits also no coalescence or death, and whose reproduction and migration measures $\bar R_{vu},\bar M_{vu}, u,v\in V$ are obtained by replacing $\delta_0$ with $\delta_1$ in the definition of the corresponding measure $R_{vu}$ respectively $M_{vu}$ of $(Z_t)_{t \geq 0}$. In this process, all particles move simultaneously at rate $\mu$ to one of the neighbours of the present vertex in the next generation, and at rate $r$ they reproduce simultaneously. In order to simplify the notation, we use the notations $Z_t^{(k,i)}$ and $\bar Z_t^{(k,i)}$ instead of $Z_t^{(v_{(k,i)})}$ resp.\ $\bar Z_t^{(v_{(k,i)})}$ for the coordinates of $Z_t$ resp.\ $\bar Z_t$ corresponding to the vertex $v_{(k,i)}$, $k \in \N_0$, $i=1,\ldots, d^k$. For $k \in \N$, $j \in \{0,1,\ldots,k-1\}$ and $i=1,\ldots,d^k$, let us write $a(j,k,i)$ for the ancestor of $(k,i)$ in generation $j$, i.e., for the unique vertex of the form $(j,\cdot)$ connected by a path (i.e., sequence of edges) to $(k,i)$. In particular, $a(0,k,i)=o$ for all $i=1,\ldots,d^k$.

Then, it is easy to check that the nonexplosion condition~\eqref{nastycondition} is satisfied, and hence Corollary~\ref{cor:Emonconv} is applicable. Together with Lemma~\ref{lem:expectationequality}, we obtain that starting from one single particle at $o$ at time zero, for $t\geq 0$, $k \in \N_0$ and $i=1,\ldots,d^k$ we have
\begin{align*}
    \E\big[ Z_t^{(k,i)} \big] = \E\big[ \bar Z_t^{(k,i)} \big] = \mathrm e^{rt} \frac{(t\mu)^k}{k!} \mathrm e^{-t\mu}\frac{1}{d^k} = \mathrm e^{t(r-\mu)} \big(\frac{t\mu}{d}\big)^k \frac{1}{k!}. \numberthis\label{eqexpbranching}
\end{align*}
This yields a probabilistic interpretation for the sytem of ODEs
\[ \begin{aligned} \frac{\dd}{\dd t} f(t,(k,i)) &= (r-\mu) f(t,(k,i)) + \frac{\mu}{d} f(t,a(k-1,k,i)), \quad k \in \N, i=1,\ldots,d^k, \\
\frac{\dd}{\dd t} f(t,o) & = (r-\mu) f(t,o), \\
f(0,(k,i))&=\delta_{0,k}, \end{aligned} \numberthis\label{reactiondiffusion} \]
where it is straightforward to derive that its unique solution $f(t,(k,i))$ equals $\E[Z_t^{(k,i)}]$ from \eqref{eqexpbranching}. 
Note that the representation \eqref{eqexpbranching} of the solution of the system~\eqref{reactiondiffusion} by a fully coordinated process is in analogy to the one provided for the solution of the PAM in the proof of Proposition~\ref{thm:feynmankac}. Hence, \eqref{eqexpbranching} can also be interpreted as an explicit Feynman--Kac representation. 

By Lemma~\ref{lem:expectationequality}, for all structured branching-coalescing processes with coordination on $G$ where all the measures of the form $R_{vv}$ and $M_{vu}$ have the same total mass as for the branching random walk and all other measures are zero, the expectation of the process solves \eqref{reactiondiffusion}.
\end{remark}

\section*{Acknowledgements}
We thank J.~Blath, F.~Hermann, M.~Wilke-Berenguer and D.~Valesin for interesting discussions and comments. We are grateful for Argelia Hernandez Recio and Julio Nava Trejo for producing Figure~\ref{imagen}. We also thank two anonymous reviewers for insightful comments and suggestions. AGC was supported by the grant CONACYT CIENCIA BASICA A1-S-14615. NK was supported by the DFG Priority Programme SPP 1590 ``Probabilistic Structures in Evolution'', grant no.~KU 2886/1-1. AT was supported by the DFG Priority Programme SPP 1590 ``Probabilistic Structures in Evolution''.

\end{document}